\title{Locally well generated homotopy categories of complexes}
\date{May 14, 2010}
\author{Jan \v S\v tov\'\i\v cek}
\address{
Charles University in Prague \\
Faculty of Mathematics and Physics \\
Department of Algebra \\
Sokolovska 83, 186 75 Praha \\
Czech Republic
}
\email{stovicek@karlin.mff.cuni.cz}
\subjclass[2000]{18G35 (Primary) 18E30, 18E35, 16D90 (Secondary)}
\keywords{Compactly and well generated triangulated categories, complexes, pure semisimplicity}
\thanks{The author was supported by the Research Council of Norway through the Storforsk-project ``Homological and geometric methods in algebra'' and also by the grant GAUK 301-10/252216.}
\renewcommand{\iff}{if and only if }
\newcommand{\st}{such that }
\newcommand{\N}{\mathbb{N}}
\newcommand{\Z}{\mathbb{Z}}
\DeclareMathOperator{\Ext}{Ext}
\DeclareMathOperator{\Ker}{Ker}
\DeclareMathOperator{\Img}{Im}
\newcommand{\spanned}[1]{\langle #1 \rangle}
\newcommand{\cpx}[1]{{#1}}
\newcommand{\A}{\mathcal{A}}
\newcommand{\B}{\mathcal{B}}
\newcommand{\C}{\mathcal{C}}
\newcommand{\D}{\mathcal{D}}
\newcommand{\clL}{\mathcal{L}}
\newcommand{\clS}{\mathcal{S}}
\newcommand{\T}{\mathcal{T}}
\newcommand{\Modu}[1]{\hbox{\rm Mod-}{#1}}
\newcommand{\ModR}{\hbox{\rm Mod-}R}
\newcommand{\ModA}{\hbox{\rm Mod-}\!\A}
\newcommand{\FlatR}{\hbox{\rm Flat-}R}
\newcommand{\FlatA}{\hbox{\rm Flat-}\!\A}
\newcommand{\ProjR}{\hbox{\rm Proj-}R}
\newcommand{\ProjA}{\hbox{\rm Proj-}\!\A}
\newcommand{\fpB}{\hbox{\rm fp}(\B)}
\newcommand{\Ab}{\mathrm{Ab}}
\newcommand{\Add}{\mathrm{Add}}
\newcommand{\Der}{\mathbf{D}}
\newcommand{\Htp}{\mathbf{K}}
\newcommand{\Comp}{\mathbf{C}}
\newcommand{\Loc}{\mathrm{Loc}\,}
\theoremstyle{plain}
\newtheorem{thm}{Theorem}[section]
\newtheorem{prop}[thm]{Proposition}
\newtheorem{lem}[thm]{Lemma}
\newtheorem{cor}[thm]{Corollary}
\theoremstyle{definition}
\newtheorem{defn}[thm]{Definition}
\theoremstyle{remark}
\newtheorem*{rem}{Remark}
\newtheorem{expl}[thm]{Example}
\begin{document}

\begin{abstract}
We show that the homotopy category of complexes $\mathbf{K}(\mathcal{B})$ over any
finitely accessible additive category $\mathcal{B}$ is locally well
generated. That is, any localizing subcategory $\mathcal{L}$ in $\mathbf{K}(\mathcal{B})$
which is generated by a set is well generated in the sense of
Neeman. We also show that $\mathbf{K}(\mathcal{B})$ itself being well generated is
equivalent to $\mathcal{B}$ being pure semisimple, a concept which naturally
generalizes right pure semisimplicity of a ring $R$ for $\mathcal{B}= \textrm{Mod-}R$.
\end{abstract}

\maketitle

\section*{Introduction}

The main motivation for this paper is to study when the homotopy category of
complexes $\Htp(\B)$ over an additive category $\B$ is compactly
generated or, more generally, well generated.

\smallskip

In the last few decades, the theory of compactly generated
triangulated categories has become an important tool unifying concepts
from various fields of mathematics. Standard examples are the
unbounded derived category of a ring or the stable homotopy category
of spectra. The key property of such a category $\T$ is the Brown
Representability Theorem, cf.~\cite{N3,Kr2}, originally due to Brown~\cite{Br}:

\begin{quote}
Any contravariant cohomological functor $F: \T \to \Ab$ which sends
coproducts to products is representable.
\end{quote}

This theorem is an important tool and has been used in several
places. We mention Neeman's proof of the Grothendieck Duality
Theorem~\cite{N3}, Krause's work on the Telescope
Conjecture~\cite{Kr3,Kr4}, or Keller's representation theorem for
algebraic compactly generated triangulated categories~\cite{Kel}.

Recently, there has been a growing interest in giving criteria for
certain homotopy categories $\Htp(\B)$ to be compactly generated,
\cite{HJ,Jo1,Kr5,N2}. Here, $\B$ typically was a suitable subcategory
of a module category. The main reason for studying such homotopy categories
were results concerning the Grothendieck Duality
Theorem~\cite{IK,N2} and relative homological
algebra~\cite{Jo2}. There is, however, a conceptual
reason, too. Namely, every algebraic triangulated category is triangle
equivalent to a full subcategory of some homotopy category,
\cite[\S 7.5]{Kr2}.

It turned out when studying the homotopy category of complexes of projective
modules over a ring $R$ in~\cite{N2} that it is useful to consider well generated
triangulated categories in this context. More precisely,
$\Htp(\ProjR)$ is always well generated, but may not be compactly
generated. Well generated categories have been defined by
Neeman~\cite{N} in a natural attempt to extend results such as the Brown
Representability from compactly generated triangulated categories to a
wider class of triangulated categories.

\smallskip

Although one has already known for some time that there exist rather natural triangulated
categories, such as the homotopy category of complexes of abelian
groups, which are not even well generated, one
has typically viewed those as rare and exceptional cases.

We will give some arguments to show that this interpretation is not
very accurate. First, the categories $\Htp(\ModR)$ for
a ring $R$ are rarely well generated. It happens \iff $R$ is right
pure semisimple, which establishes the converse of~\cite[\S 4 (3),
p. 17]{HJ}. Moreover, we generalize this result to the homotopy
categories $\Htp(\B)$ with $\B$ additive finitely accessible. This
way, we obtain a fairly complete answer regarding when
$\Htp(\FlatR)$ is compactly or well generated, see~\cite[Question
4.2]{HJ}.

We also give a partial remedy for the typical failure of $\Htp(\B)$ to be
well generated. Roughly speaking, the main problem with $\Htp(\B)$, where
$\B$ is finitely accessible, is that it
may not have any set of generators at all. But if we take a
localizing subcategory $\clL$ generated by any set of objects, it will
automatically be well generated. We will call a triangulated category
with this property locally well generated.

We will also give basic properties of locally well generated
categories and see that some of the usual results regarding localization
hold in the new setting. For example, any localizing
subcategory generated by a set of objects is realized as the kernel
of a localization endofunctor. This version of a Bousfield
localization theorem generalizes~\cite[\S 7.2]{Kr}
and~\cite[5.7]{AJS}. However, one has to be more careful. The Brown
Representability theorem as stated above does not work for locally
well generated categories in general, and there are localizing
subcategories which are not associated to any localization
endofunctor. We illustrate this in
Example~\ref{expl:brow_rep_failure}.

\subsection*{Acknowledgements}
The author would like to thank Henning Krause for several helpful
discussions and suggestions, as well as for his hospitality during the
author's visits in Paderborn.

% -----------------------------------------------------------------------------
\section{Preliminaries}
\label{sec:prelim}

Let $\T$ be a triangulated category. A triangulated subcategory $\clS
\subseteq \T$ is called \emph{thick} if, whenever $X \amalg Y \in
\clS$, then also $X \in \clS$. From now on, we will assume that $\T$
has arbitrary (set-indexed) coproducts. A full triangulated
subcategory $\clL \subseteq \T$ is called \emph{localizing} if it is
closed under forming coproducts. Note that by~\cite[1.6.8]{N}, $\T$
has splitting idempotents and any localizing subcategory $\clL
\subseteq \T$ is thick.

If $\clS$ is any class of objects of $\T$, we denote by $\Loc\clS$ the
smallest localizing subcategory of $\T$ which contains $\clS$. In other
words, $\Loc\clS$ is the closure of $\clS$ under shifts, coproducts and
triangle completions.

Given $\T$ and a localizing subcategory $\clL \subseteq \T$, one can
construct the so-called \emph{Verdier quotient} $\T/\clL$ by formally
inverting in $\T$ all morphisms in the class $\Sigma(\clL)$ defined as
$$ \Sigma(\clL) = \{f \mid \exists \textrm{ triangle $X \overset{f}\to Y \to Z \to X[1]$ in $\T{}$ \st $Z \in \clL{}$} \}. $$
It is a well known fact that the Verdier quotient always has
coproducts, admits a natural triangulated structure, and the
canonical localization functor $Q: \T \to \T/\clL$ is exact and preserves
coproducts, \cite[Chapter 2]{N}. However, one has to be careful, since
$\T/\clL$ might not be a usual category in the sense that the
homomorphism spaces might be proper classes rather than sets. This fact,
although often inessential and neglected, as $\T/\clL$ has a very
straightforward and constructive description, may nevertheless have
important consequences in some cases; see eg.~\cite{CN}.

Let $L: \T \to \T$ be an exact endofunctor of $\T$. Then $L$ is called
a \emph{localization functor} if there exists a natural transformation
$\eta: \mathrm{Id}_\T \to L$ \st $L\eta_X = \eta_{LX}$ and $\eta_{LX}:
LX \to L^2 X$ is an isomorphism for each $X \in \T$.
% Note that the last two conditions essentially say that $L$ is an
% idempotent endofunctor.

It is easy to check that the full subcategory $\Ker L$
of $\T$ given by
$$ \Ker L = \{ X \in \T \mid LX = 0 \} $$
is always localizing \cite[1.2]{AJS}. Moreover, there is a canonical
triangle equivalence between $\T/\Ker L$ and $\Img L$, the essential
image of $L$; see \cite[9.1.16]{N} or \cite[4.9.1]{Kr}.
This among other things implies that all morphism spaces in $\T/\Ker L$ are
sets. Note that although $\Img L$ has coproducts as a category, it
might \emph{not} be closed
under coproducts in $\T$. This type of localization, coming from a
localization functor, is often referred to as \emph{Bousfield localization}.
However, not every localizing
subcategory $\clL$ is realized as the kernel of a localization
functor, \cite[1.3]{CN}. Namely, $\clL$ is of the form $\Ker L$ for
some localization functor \iff the inclusion $\clL \to \T$ has a right adjoint, \cite[1.6]{AJS}.

\smallskip

A central concept in this paper is that of a well generated
triangulated category. Let $\kappa$ be a regular cardinal number. An
object $Y$ in a category with arbitrary coproducts is called
\emph{$\kappa$--small} provided that every morphism of the form
$$ Y \longrightarrow \coprod_{i \in I} X_i $$
factorizes through a subcoproduct $\coprod_{i \in J} X_i$ with $\lvert
J \rvert < \kappa$.

\begin{defn} \label{defn:well_gen}
Let $\T$ be a triangulated category with arbitrary coproducts and
$\kappa$ be a regular cardinal. Then $\T$ is called
\emph{$\kappa$--well generated} provided there is a set $\clS$ of
objects of $\T$ satisfying the following conditions:
\begin{enumerate}
\item If $X \in \T$ \st $\T(Y,X) = 0$ for each $Y \in \clS$, then
$X = 0$;
\item Each object $Y \in \clS$ is $\kappa$--small;
\item For any morphism in $\T$ of the form $f: Y \to \coprod_{i \in I}
X_i$ with $Y \in \clS$, there exists a family of morphisms
$f_i: Y_i \to X_i$ \st $Y_i \in \clS$ for each $i \in I$ and $f$ factorizes as
$$
Y \xrightarrow{\phantom{\coprod f_i}}
\coprod_{i \in I} Y_i \xrightarrow{\coprod f_i}
\coprod_{i \in I} X_i.
$$
\end{enumerate}

The category $\T$ is called \emph{well generated} if it is
$\kappa$--well generated for some regular cardinal $\kappa$.
\end{defn}

This definition differs to some extent from Neeman's original
definition in~\cite[8.1.7]{N}. The equivalence between the two follows
from~\cite[Theorem A]{Kr6} and~\cite[Lemmas 4 and 5]{Kr6}.
Note that if $\kappa = \aleph_0$, then condition $(3)$ is
vacuous and $\aleph_0$--well generated triangulated categories are precisely
the \emph{compactly generated} triangulated categories in the usual sense.

The key property of well generated categories is that the Brown
Representability Theorem holds:
\begin{prop} \label{prop:brown} \cite[8.3.3]{N}
Let $\T$ be a well generated triangulated category. Then:
\begin{enumerate}
\item Any contravariant cohomological functor $F: \T \to \Ab$ which takes
coproducts to products is, up to isomorphism, of the form $\T(-,X)$
for some $X \in \T$.
\item If $\clS$ is a set of objects of $\T$ which meets
  assumptions $(1)$, $(2)$ and $(3)$ of Definition~\ref{defn:well_gen}
  for some cardinal $\kappa$, then $\T = \Loc \clS$.
\end{enumerate}
\end{prop}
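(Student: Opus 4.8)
Since this is Neeman's Brown Representability Theorem, the plan is to run the standard transfinite ``Adams'' construction of \cite[Ch.~8]{N}, organised around the given set $\clS$. After one preliminary reduction---replacing $\clS$ by $\bigcup_{n\in\Z}\clS[n]$, which changes neither $\Loc\clS$ nor the validity of conditions (1)--(3)---I may assume $\clS$ is closed under shifts. The one technical input I would establish first is that each $Y\in\clS$ is \emph{$\kappa$-compact} in Neeman's sense: for every continuous $\kappa$-indexed tower $X_0\to X_1\to\cdots\to X_\alpha\to\cdots$ (with $\alpha<\kappa$) and homotopy colimit $X_\kappa$, each morphism $Y\to X_\kappa$ factors through some $X_\alpha$, and a factor $Y\to X_\alpha$ that becomes zero in $X_\kappa$ already becomes zero in some later $X_\beta$. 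I expect this to be the main obstacle. It is exactly what conditions (2) and (3) are for, together with the regularity of $\kappa$: one uses the homotopy-colimit triangle $\coprod_\alpha X_\alpha\xrightarrow{1-\mathrm{shift}}\coprod_\alpha X_\alpha\to X_\kappa\to{}$ and $\kappa$-smallness of $Y$ to push a morphism out of $Y$ into a sub-coproduct on fewer than $\kappa$ indices, then condition (3) to split that sub-coproduct as a coproduct of morphisms out of objects of $\clS$, and finally regularity of $\kappa$ to collect everything into a single stage; the translation between conditions (2),(3) and Neeman's perfectness that makes this work is the content of \cite[Thm.~A, Lemmas 4,5]{Kr6} quoted above.

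Granting this, I would prove (2) first. Fix $X\in\T$ and build a continuous $\kappa$-indexed tower inside $\Loc\clS$ together with compatible maps $p_\alpha\colon X_\alpha\to X$: put $X_0=\coprod_{S\in\clS,\,g\in\T(S,X)}S$ with $p_0$ the evaluation map; given $p_\alpha$, form the triangle $\coprod_{(S,g)}S\to X_\alpha\to X_{\alpha+1}\to{}$ over all pairs with $S\in\clS$, $g\colon S\to X_\alpha$ and $p_\alpha g=0$, observe that $\coprod_{(S,g)}S\to X_\alpha\xrightarrow{p_\alpha}X$ vanishes, and let $p_{\alpha+1}$ be a factorisation of $p_\alpha$ through $X_{\alpha+1}$; at limit stages pass to homotopy colimits. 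Write $X'=X_\kappa$ and let $p\colon X'\to X$ be the induced map. By the $\kappa$-compactness input, $\T(S,p)$ is bijective for every $S\in\clS$: surjectivity is built in at stage $0$, and if $g\colon S\to X'$ has $pg=0$ then $g$ factors through some $X_\alpha$, the factor is then one of the pairs killed at stage $\alpha$, so $g=0$. Since $\clS$ is shift-closed, the cone $C$ of $p$ satisfies $\T(S,C)=0$ for all $S\in\clS$, whence $C=0$ by condition (1) and $X\cong X'\in\Loc\clS$. Thus $\T=\Loc\clS$.

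For (1), run the dual construction against the cohomological functor $F$. Set $X_0=\coprod_{S\in\clS,\,s\in F(S)}S$; as $F$ sends this coproduct to $\prod_{S,s}F(S)$, it carries the element $x_0$ whose $(S,s)$-coordinate is $s$, and the natural transformation $\T(-,X_0)\to F$ induced by $x_0$ is surjective at every $S\in\clS$. Inductively, given $x_\alpha\in F(X_\alpha)$, form the triangle $\coprod_{(S,g)}S\to X_\alpha\to X_{\alpha+1}\to{}$ over all $g\colon S\to X_\alpha$ with $F(g)(x_\alpha)=0$; the image of $x_\alpha$ in $F(\coprod_{(S,g)}S)=\prod F(S)$ is then $0$, so the long exact sequence of $F$ along the triangle yields a lift $x_{\alpha+1}\in F(X_{\alpha+1})$ of $x_\alpha$. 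At a limit stage $\lambda$ the compatible family $(x_\alpha)_{\alpha<\lambda}$ lies in the kernel of the map $\prod_\alpha F(X_\alpha)\to\prod_\alpha F(X_\alpha)$ induced by $1-\mathrm{shift}$, so the long exact sequence of the homotopy-colimit triangle produces $x_\lambda\in F(X_\lambda)$ restricting to each $x_\alpha$. Put $X=X_\kappa$, let $x\in F(X)$ be the limit element, and let $\eta\colon\T(-,X)\to F$ be the associated natural transformation. As in part (2), $\kappa$-compactness shows $\eta_S$ is bijective for every $S\in\clS$ (surjectivity from stage $0$; injectivity by factoring a morphism $S\to X$ through a stage and landing among the killed pairs). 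Finally, the class of $W\in\T$ for which $\eta_{W[n]}$ is an isomorphism for all $n\in\Z$ is closed under shifts by definition, under coproducts because both $\T(-,X)$ and $F$ send coproducts to products, and under triangle completions by the five lemma applied to the long exact sequences of the two cohomological functors; it contains $\clS$, hence equals $\Loc\clS=\T$ by part (2). Therefore $\eta$ is an isomorphism and $F\cong\T(-,X)$. Beyond the $\kappa$-compactness input, the only thing needing attention is the routine bookkeeping of the transfinite choices of the $x_\alpha$ and the compatibility of the resulting family.
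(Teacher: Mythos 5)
The paper itself offers no argument here---it quotes the result from \cite[8.3.3]{N} (together with \cite{Kr6} for the translation of Definition~\ref{defn:well_gen} into Neeman's terms)---so the only question is whether your reconstruction is sound, and I do not think it is as written. Your entire argument rests on the ``$\kappa$-compactness input'': that for a continuous $\kappa$-indexed tower $(X_\alpha)_{\alpha<\kappa}$ with homotopy colimit $X_\kappa$, every morphism $Y\to X_\kappa$ with $Y\in\clS$ factors through some stage $X_\alpha$. Conditions (2) and (3) do not yield this, and your sketch conflates two different things. Smallness does let you push a map $Y\to\coprod_{\alpha<\kappa}X_\alpha$ into a subcoproduct indexed by fewer than $\kappa$ ordinals (and, by an argument with the coordinatewise transition maps, it even gives the second half of your claim, that a map dying in $X_\kappa$ dies at a later stage). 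But to factor an arbitrary $g\colon Y\to X_\kappa$ through $\coprod_\alpha X_\alpha$ at all, you need the map $\T(Y,\coprod_\alpha X_\alpha)\to\T(Y,X_\kappa)$ coming from the triangle $\coprod X_\alpha\xrightarrow{1-\mathrm{shift}}\coprod X_\alpha\to X_\kappa\to\Sigma\coprod X_\alpha$ to be surjective, i.e.\ $(1-\mathrm{shift})_*$ to be injective on $\T(Y,\Sigma\coprod_\alpha X_\alpha)$. In the compactly generated case this follows because $\T(Y,-)$ turns the coproduct into a direct sum; for uncountable $\kappa$ a $\kappa$-small $Y$ gives no such decomposition, condition (3) only refines maps \emph{into} coproducts and says nothing about this boundary map, and the objects $X_\alpha$ of your tower are themselves huge, so no compactness of the stages can be invoked either. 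Nor is this factorization contained in \cite{Kr6}: Theorem~A and Lemmas~4,~5 there compare the two definitions of well generatedness ($\kappa$-perfect classes versus conditions (1)--(3)); they prove nothing about transfinite towers. This is exactly the point where the naive generalization of the Brown--Adams argument breaks down, and it is why the actual proofs (Neeman's in \cite[Ch.~8]{N}, Krause's via perfect generators/coherent functors) use a \emph{countable} tower and exploit the perfectness condition (3) to control injectivity in the colimit, rather than a length-$\kappa$ tower with a stagewise factorization property.

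The surrounding architecture of your proposal is fine and standard---killing kernels against $\clS$, lifting the element of $F$ through cones and homotopy colimits via the product condition, deducing (2) from vanishing of the cone, and propagating the isomorphism from $\clS$ to all of $\T=\Loc\clS$ by shifts, coproducts and the five lemma. But as it stands the proof is incomplete: either prove the transfinite factorization claim (which I do not believe can be done from (2) and (3) in this generality), or replace the transfinite tower by the countable construction in which condition (3) (perfectness), not smallness, is what makes $\T(S,-)$ behave well on the homotopy colimit.
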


\smallskip

Next we turn our attention to categories of complexes. Let $\B$ be an additive category. Using a standard notation, we denote by $\Comp(\B)$ the category of chain complexes
$$
\cpx X: \qquad
\dots \to X^{n-1} \overset{d^{n-1}}\to X^n \overset{d^n}\to X^{n+1} \to \dots,
\qquad
$$
of objects of $\B$. By $\Htp(\B)$, we denote the factor-category of
$\Comp(\B)$ modulo the ideal of null-homotopic chain complex
morphisms. It is well known that $\Htp(\B)$ has a triangulated
structure where triangle completions are constructed using mapping
cones (see for example~\cite[Chapter I]{H}). Moreover, if $\B$ has
arbitrary coproducts, so have them both $\Comp(\B)$ and $\Htp(\B)$,
and the canonical functor $\Comp(\B) \to \Htp(\B)$ preserves
coproducts.

We will often take for $\B$ module categories or their subcategories.
In this case, $R$ will denote an associative unital ring and
$\ModR$ the category of all (unital) right $R$--modules. By $\ProjR$ and
$\FlatR$ we denote, respectively, the full subcategories of projective
and flat $R$--modules.

In fact, our considerations will usually work in a more general
setting. Let $\A$ be a skeletally small additive category and
$\ModA$ be the category of all contravariant additive functors
$\A \to \Ab$. We will call such functors \emph{right $\A$--modules}.
Then $\ModA$ shares many formal properties with usual module
categories. We refer to~\cite[Appendix B]{JL} for more
details. Correspondingly, we denote by $\ProjA$ the full
subcategory of projective functors and by $\FlatA$ the category of
flat functors. We discuss the categories of the form $\FlatA$
more in detail in Section~\ref{sec:fin_acc} since those are, up to
equivalence, precisely the so called additive finitely accessible categories. Many
natural abelian categories are of this form.

\smallskip

Finally, we spend a few words on set-theoretic considerations. All our
proofs work in ZFC with an extra technical assumption: the axiom of
choice for proper classes. The latter assumption has no algebraic
significance, it is only used to keep arguments simple in the following
case:

Let $F: \C \to \D$ be a covariant additive functor. If
we know, for example by
the Brown Representability Theorem, that the composition of functors
$$
\C \xrightarrow{\;\;\;\;\, F \;\;\;\;\,}
\D \xrightarrow{\D(-,X)} \Ab
$$
is representable for each $X \in \D$, we would like to conclude that
$F$ has a right adjoint $G: \D \to \C$. In order to do that, we must
for each $Y \in \C$ \emph{choose} one particular value for $GY$ from a class
of mutually isomorphic candidates.

% -----------------------------------------------------------------------------
\section{Pure semisiplicity}
\label{sec:pss}

A relatively straightforward but crucial obstacle causing a homotopy
category of complexes $\Htp(\B)$ not to be well generated is that the
additive base category $\B$ is not pure semisimple. Here, we use the
following very general definition:

\begin{defn} \label{defn:pss}
An additive category $\B$ with arbitrary coproducts is called
\emph{pure semisimple} if it has an additive generator. That is,
there is an object $X \in \B$ \st $\B = \Add X$, where $\Add X$
stands for the full subcategory formed by all objects which are summands in (possibly
infinite) coproducts of copies of $X$.
\end{defn}

The term is inspired by the case $\B = \ModR$, where we have the
following proposition:

\begin{prop} \label{prop:pss_ModR}
A ring $R$ is right pure semisimple (that is, each pure monomorphism
between right $R$--modules splits) \iff $\ModR$ is pure
semisimple in the sense of Definition~\ref{defn:pss}.
\end{prop}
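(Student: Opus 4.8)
The plan is to prove the two implications separately, relying on the well-known characterization of right pure semisimple rings via pure-injective (= algebraically compact) modules. Recall that $R$ is right pure semisimple if and only if every right $R$-module is pure-injective, and also if and only if every right $R$-module is a coproduct of indecomposable modules with local endomorphism rings (a theorem of Chase, Auslander, Gruson--Jensen, Zimmermann-Huisgen, Simson). In particular, pure semisimplicity implies there is, up to isomorphism, only a set of indecomposable right $R$-modules, and each of them is finitely generated (in fact of bounded finite length by Auslander's argument, though we shall only need the ``set'' and ``coproduct decomposition'' statements).

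First I would prove ($\Rightarrow$). Suppose $R$ is right pure semisimple. Choose a representative set $\{M_i \mid i \in I\}$ of the isomorphism classes of indecomposable right $R$-modules, and put $X = \coprod_{i \in I} M_i$. By the decomposition theorem every right $R$-module $N$ is isomorphic to a coproduct $\coprod_{j} N_j$ of indecomposables; each $N_j$ is isomorphic to some $M_{i(j)}$, hence $N$ is a summand of a coproduct of copies of $X$ (indeed it already is such a coproduct). Thus $\ModR = \Add X$, so $\ModR$ is pure semisimple in the sense of Definition~\ref{defn:pss}.

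Conversely, for ($\Leftarrow$), suppose $\ModR = \Add X$ for some module $X$. I would show every module is pure-injective. The class of pure-injective modules is closed under arbitrary products and under direct summands. A standard fact (Gruson--Jensen, or a direct argument via the functor $\Hom_R(-,\prod X)$ using a cardinality bound on the ``pure-essential'' size) is that for any module $Y$ the product $Y^{(\kappa)}$ of sufficiently many copies, or rather the pure-injective envelope considerations, let one compare coproducts and products: concretely, the key point is that the canonical map $X^{(\aleph_0)} \to X^{\aleph_0}$ is a pure monomorphism, and more generally one knows that a module is pure-injective as soon as some ``large enough'' coproduct of copies of it is a summand of a product of copies of it. Since $X \in \Add X = \ModR$, in particular the module $X^{\N}$ (a product) lies in $\Add X$, so it is a summand of a coproduct $X^{(J)}$; feeding this back through the standard Jensen--Lenzing-style argument (see~\cite[Appendix B]{JL}) shows $X$, and hence every summand of a coproduct of copies of $X$, i.e.\ every module, is pure-injective. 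Therefore every pure monomorphism of right $R$-modules splits, i.e.\ $R$ is right pure semisimple.

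I expect the main obstacle to be the ($\Leftarrow$) direction, specifically making precise and citing correctly the implication ``$X^{\aleph_0} \in \Add X$ forces every module to be $\Sigma$-pure-injective.'' This is the heart of the matter and is exactly the classical equivalence (one of the characterizations in work of Gruson--Jensen and Zimmermann) between right pure semisimplicity and the property that coproducts of copies of a big enough module split off the corresponding products; the clean way to invoke it is to note that $\Add X = \ModR$ in particular contains all products $X^{\alpha}$, and a module $Y$ with $Y^{(\aleph_0)}$ a direct summand of $Y^{\aleph_0}$ is $\Sigma$-pure-injective, from which one deduces that $\ModR$ consists of $\Sigma$-pure-injective modules and hence $R$ is right pure semisimple. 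The remaining parts are routine once the relevant module-theoretic facts are quoted from~\cite{JL} and the standard literature on pure semisimplicity.
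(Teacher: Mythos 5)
Your direction ($\Rightarrow$) is correct, though you invoke the full structure theorem for right pure semisimple rings (every module a coproduct of indecomposables, of which there is only a set) where a much lighter argument suffices: the paper only uses that if pure monomorphisms split then pure epimorphisms split, so every module is pure projective, i.e.\ a summand of a direct sum of finitely presented modules (refined via Kaplansky's theorem to direct sums of countably generated modules); since these form a set up to isomorphism, one may take $X$ to be the direct sum of a representative set. Your heavier citation is legitimate but imports a deep theorem whose proof already contains everything at stake here.

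The ($\Leftarrow$) direction, which you yourself flag as the heart of the matter, has a genuine gap as written: the two split-inclusion conditions you work with point in opposite directions. From $\ModR = \Add X$ you correctly obtain that each product $X^I$ is a direct summand of a coproduct $X^{(J)}$; but the criterion you actually quote --- ``$Y^{(\aleph_0)}$ a direct summand of $Y^{\aleph_0}$ implies $Y$ is $\Sigma$-pure-injective'' --- requires the \emph{coproduct} to be (canonically) a summand of the \emph{product}, and nothing you have established yields that; indeed, splitting the canonical pure monomorphism $X^{(\aleph_0)} \to X^{\aleph_0}$ is essentially the pure-injectivity you are trying to prove, so ``feeding this back'' is circular as stated. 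The correct bridge, and the one the paper uses, is different: if $X$ is $\kappa$-generated, then any summand of a coproduct of copies of $X$ is, by Kaplansky's theorem in the form of \cite[Theorem 26.1]{AF}, a direct sum of $\lambda$-generated modules with $\lambda = \max(\kappa,\aleph_0)$; hence every module --- in particular every product of copies of $X$ --- decomposes into summands of bounded size, and by the theorem of Gruson and Jensen \cite{GJ} such bounded decompositions of products force every module to be $\Sigma$-pure-injective, whence every pure monomorphism splits. So your overall strategy coincides with the paper's, but the vague step ``the standard Jensen--Lenzing-style argument'' must be replaced by this Kaplansky-plus-Gruson--Jensen argument; the precise fact you quoted does not apply to the situation you are in.
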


\begin{proof}
If every pure monomorphism in $\ModR$ splits, then also every pure
epimorphism splits. That is, every module is pure projective, or
equivalently a summand in a direct sum
of finitely presented modules. By a theorem of Kaplansky,
\cite[Theorem 1]{Kap}, it follows that every module is a direct sum of
countably generated modules. Hence, $\ModR$ is pure semisimple
according to our definition. In fact, one can show more in this case:
Every module is even a direct sum of finitely presented modules; see for
example~\cite{Z-H} or~\cite[App. B]{JL}.

% If any pure monomorphism in $\ModR$ splits, then every right
% $R$--module is pure injective, hence algebraically
% compact. We refer to \cite[Theorem 1]{Z-H} for details.
% This implies that each module is a
% direct sum of finite generated (and in fact even finitely presented)
% submodules; see for example~\cite[Theorem 13]{Z-H}. Consequently,
% $\ModR$ is pure semisimple according to our definition.

Let us conversely assume that $\ModR$ is a pure semisimple additive category.
Using~\cite[Theorem 26.1]{AF}, which is a variation of~\cite[Theorem 1]{Kap}
for higher cardinalities, we see that if
$\ModR = \Add X$ for some $\kappa$--generated module $X$, then
each module in $\ModR$ is a direct sum of
$\lambda$--generated modules where $\lambda = \max(\kappa,\aleph_0)$.
This fact implies that every module is $\Sigma$--pure
injective, \cite{GJ}. In particular, each pure monomorphism in $\ModR$
splits and $R$ is right pure semisimple.
\end{proof}

If $R$ is an artin algebra, then the conditions of
Proposition~\ref{prop:pss_ModR} are well-known to
be further equivalent to $R$ being of finite representation
type; see~\cite[Theorem A]{Aus}. For more details and references on
this topic, we also refer to~\cite{Z-H}.
It turns out that the pure semisimplicity condition has a nice
interpretation for finitely accessible additive categories as well.
We will discuss this more in detail in Section~\ref{sec:fin_acc}.

\smallskip

For giving a connection between pure semisimplicity of $\B$ and properties of $\Htp(\B)$, we recall a structure result for the so-called contractible complexes in $\Comp(\B)$. A complex $\cpx Y \in \Comp(\B)$ is \emph{contractible} if it is mapped to a zero object under $\Comp(\B) \to \Htp(\B)$. It is clear that the complexes of the form
$$
% \label{eqn:proj_cpx}
\cpx I_{X,n}: \qquad
\dots \to 0 \to 0 \to X = X \to 0 \to 0 \to \dots,
\qquad
$$
such that the first $X$ is in degree $n$, are contractible. Moreover, all other contractible complexes are obtained in the following way:

\begin{lem} \label{lem:htp_zero}
Let $\B$ be an additive category with splitting idempotents and $\cpx Y \in \Comp(\B)$. Then the following are equivalent:
\begin{enumerate}
\item $\cpx Y$ is contractible;
\item $\cpx Y$ is isomorphic in $\Comp(\B)$ to a complex of the form $\coprod_{n \in \Z} \cpx I_{X_n,n}$.
\end{enumerate}
\end{lem}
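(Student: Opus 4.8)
The implication $(2)\Rightarrow(1)$ is the easy direction: each $\cpx I_{X_n,n}$ is visibly contractible (the identity on $X_n$ provides a contracting homotopy), and the canonical functor $\Comp(\B)\to\Htp(\B)$ preserves coproducts since $\B$ has coproducts, so a coproduct of contractible complexes is contractible; an isomorphism in $\Comp(\B)$ then carries this over to $\cpx Y$. The substance is in $(1)\Rightarrow(2)$.

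For $(1)\Rightarrow(2)$, I would start from a contracting homotopy, i.e.\ a family of maps $s^n\colon Y^n\to Y^{n-1}$ with $d^{n-1}s^n + s^{n+1}d^n = 1_{Y^n}$ for all $n$. The classical argument over abelian categories splits each $Y^n$ as $\Img(d^{n-1})\oplus\Ker(d^n)$ using the idempotent $e^n := s^{n+1}d^n$ (which one checks squares to itself: $e^ne^n = s^{n+1}d^ns^{n+1}d^n = s^{n+1}(1-s^{n+2}d^{n+1})d^n\cdot\text{(simplify)} = s^{n+1}d^n = e^n$, using $d^{n+1}d^n=0$ and $d^ns^{n+1}d^n = (1-s^{n+2}d^{n+1})d^n = d^n$ to see $d^ne^n=d^n$). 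In the general additive setting one cannot literally take kernels and images, but this is exactly where the hypothesis that $\B$ \emph{has splitting idempotents} enters: the idempotent $e^n\in\End_\B(Y^n)$ splits, giving a direct-sum decomposition $Y^n = A^n\oplus B^n$ with $e^n$ the projection onto, say, $A^n$. One then verifies that the differential $d^n$ restricted to this decomposition is zero on $A^n$ and is an isomorphism $B^n\xrightarrow{\sim}A^{n+1}$ (its inverse being built from $s^{n+1}$), so that the complex breaks up, summand by degree, into a coproduct $\coprod_{n}\cpx I_{A^{n+1},n}$ (reindexing so that the ``$B^n \cong A^{n+1}$ in degrees $n,n+1$'' blocks become the $\cpx I_{X_n,n}$ with $X_n = A^{n+1}$). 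Working everything out in terms of the maps $d^n, s^n$ avoids any appeal to kernels or cokernels and keeps the proof internal to the additive category.

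The main obstacle I anticipate is organizing the bookkeeping so that the pairwise decompositions $Y^n = A^n\oplus B^n$ are assembled consistently across all degrees into a single direct-sum decomposition of the complex as $\coprod_{n\in\Z}\cpx I_{X_n,n}$: one must check that the maps identifying $B^n$ with $A^{n+1}$ are mutually compatible (no ``double counting'' of a summand appearing in both the block straddling degrees $n-1,n$ and the block straddling $n,n+1$), and that the resulting isomorphism $\cpx Y\cong\coprod_n\cpx I_{X_n,n}$ is genuinely a chain map in both directions. Verifying that the infinite coproduct in $\Comp(\B)$ agrees degreewise with $\cpx Y$ is routine once the local picture is set up, since coproducts in $\Comp(\B)$ are computed degreewise.
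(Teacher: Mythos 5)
Your strategy coincides with the paper's: from a contracting homotopy $s^n$ extract the idempotent $e^n = s^{n+1}d^n$, split it using the hypothesis on $\B$, and exhibit $\cpx Y$ as a coproduct of two-term identity blocks. However, the key verification is stated with the two summands swapped, and as written it is false. With your convention that $e^n$ is the projection onto $A^n$ (so $A^n = \Img e^n$ and $B^n = \Ker e^n$), the identity $d^n = d^n s^{n+1} d^n$ gives $d^n e^n = d^n$, hence $d^n(1-e^n)=0$: the differential vanishes on $B^n$, not on $A^n$; and since $e^{n+1}d^n = s^{n+2}d^{n+1}d^n = 0$ while $1-e^{n+1} = d^n s^{n+1}$, the map $d^n$ restricts to an isomorphism $A^n \to B^{n+1}$ (with inverse induced by $s^{n+1}$), not $B^n \to A^{n+1}$ — indeed with your labels $d^n|_{B^n}$ is the zero map, because $B^n = \Ker e^n = \Ker d^n$. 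The conclusion survives after interchanging the letters (each block $\cpx I_{X_n,n}$ has $X_n \cong \Img e^n$, a summand of $Y^n$, placed in degrees $n$ and $n+1$), but a literal verification of the claims you wrote would break down; likewise the classical splitting you invoke is $Y^n \cong \Img(e^n)\oplus\Ker(d^n)$ rather than $\Img(d^{n-1})\oplus\Ker(d^n)$.

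The ``bookkeeping obstacle'' you defer is precisely what the paper resolves, and it does so without any gluing argument: choosing $p^n: Y^n \to X_n$ and $j^n: X_n \to Y^n$ with $p^nj^n = 1_{X_n}$ and $j^np^n = e^n$, it defines degreewise maps $f^n = (d^{n-1}j^{n-1}, j^n): X_{n-1}\amalg X_n \to Y^n$ and $g^n: Y^n \to X_{n-1}\amalg X_n$ with components $p^{n-1}s^n$ and $p^n$, checks $f^ng^n = 1_{Y^n}$ and that $g^nf^n$ is invertible (hence both are isomorphisms), and observes that the $f^n$ already form a chain map from $\coprod_{n}\cpx I_{X_n,n}$ to $\cpx Y$; the cross-degree compatibility you worry about is thereby settled in one stroke, and your proposal would be complete once you carry out this (or an equivalent) explicit check with the corrected labels. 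One minor further point: in $(2)\Rightarrow(1)$ you justify preservation of coproducts by ``$\B$ has coproducts,'' which is not a hypothesis of the lemma and is not needed, since $\coprod_{n\in\Z}\cpx I_{X_n,n}$ is a finite coproduct in each degree; the paper only uses that $\Comp(\B)\to\Htp(\B)$ preserves those componentwise coproducts that exist.
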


\begin{proof}
$(2) \implies (1)$. This is trivial given the fact that the functor $\Comp(\B) \to \Htp(\B)$ preserves those componentwise coproducts of complexes which exist in $\Comp(\B)$.

$(1) \implies (2)$. Let us fix a contractible complex in $\Htp(\B)$:
$$
\cpx Y: \qquad
\dots \xrightarrow{d^{n-2}} 
Y^{n-1} \xrightarrow{d^{n-1}}
Y^n \xrightarrow{\;\;d^n\;\,}
Y^{n+1} \xrightarrow{d^{n+1}}
\dots.
\qquad
$$
By definition, the identity morphism of $\cpx Y$ is homotopy equivalent to the zero morphism in $\Comp(\B)$, so there are morphisms $s^n: Y^n \to Y^{n-1}$ in $\B$ \st
$$ 1_{Y^n} = d^{n-1} s^n + s^{n+1} d^n. $$
When composing with $d^n$, we get $d^n = d^n s^{n+1} d^n$, so
$s^{n+1} d^n: Y^n \to Y^n$ is idempotent in $\B$ for each $n \in \Z$. Hence
there are morphisms $p^n: Y^n \to X_n$ and $j^n: X_n \to Y^n$ in $\B$ \st
$p^n j^n = 1_{X_n}$ and $j^n p^n = s^{n+1} d^n$. Let us denote by
$f^n: X_{n-1} \amalg X_n \to Y^n$ and $g^n: Y^n \to X_{n-1} \amalg X_n$ the
morphisms defined as follows:
$$
f^n = (d^{n-1} j^{n-1},j^n), \quad \textrm{ and } \quad
g^n = \begin{pmatrix} p^{n-1} s^n \\ p^n \end{pmatrix}.
$$
Using the identities above, it is easy to check that $f^n g^n =
1_{Y^n}$ and $g^n f^n$ is an isomorphism in $\B$ for each
$n$. Therefore, both $f^n$ and $g^n$ are isomorphisms and $g^n f^n$ is
the identity morphism. Finally, it is
straightforward to check that the family of morphisms $(f_n \mid n \in
\Z)$ induces an (iso)morphism $f: \coprod_{n \in Z} \cpx I_{X_n,n} \to
\cpx Y$ in $\Comp(\B)$.
%
%Finally, one checks the commutativity of the following diagram with complexes in rows and isomorphisms in columns:
%
%$$
%\begin{CD}
%\dots @>>>  Y^{n-1}     @>{d^{n-1}}>>   Y^n     @>{d^n}>>   Y^{n+1}     @>>>    \dots   \\
%@.      @A{f^{n-1}}AA           @A{f^n}AA           @A{f^{n+1}}AA           \\
%\dots @>>>
%X^{n-2} \amalg X^{n-1}  @>{( ^{0\;1}_{0\;0} )}>>
%X^{n-1} \amalg X^n  @>{( ^{0\;1}_{0\;0} )}>>
%X^n \amalg X^{n+1}  @>>>
%\dots
%\end{CD}
%$$
%
%We conclude that $\cpx Y \cong \coprod_{n \in \Z} I_{X_n,n}$ in $\Comp(\B)$.
\end{proof}

It is not difficult to see that the condition of $\B$ having splitting
idempotents is really necessary in Lemma~\ref{lem:htp_zero}. However,
there is a standard construction which allows us to amend $\B$ with
the missing summands if $\B$ does not have splitting idempotents.

\begin{defn} \label{defn:idemp}
Let $\B$ be an additive category. Then an additive category $\bar\B$
is called an \emph{idempotent completion} of $\B$ if
\begin{enumerate}
\item $\bar\B$ has splitting idempotents;
\item $\B$ is a full subcategory of $\bar\B$;
\item Every object in $\bar\B$ is a direct summand of an object in $\B$.
\end{enumerate}
\end{defn}

It is a classical result that idempotent completions always exist. We
refer for example to~\cite[\S 1]{BS} for a particular
construction. Moreover, it is well-known that if $\B$ has arbitrary
coproducts, then also $\bar\B$ has them and they are compatible with
coproducts in $\B$.

Now we can state the main result of the section showing that for $\Htp(\B)$ being generated by a set (and, in particular, for $\Htp(\B)$ being well generated), the category $\B$ is necessarily pure semisimple.

\begin{thm} \label{thm:gen_by_set}
Let $\B$ be an additive category with arbitrary coproducts and assume that there is a set of objects $\clS \subseteq \Htp(\B)$ \st $\Htp(\B) = \Loc \clS$. Then $\B$ is pure semisimple.
\end{thm}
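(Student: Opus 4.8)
The plan is to show that if $\Htp(\B) = \Loc\clS$ for a set $\clS$, then there is a single object $X \in \B$ with $\B = \Add X$. First I would pass to the idempotent completion: replace $\B$ by $\bar\B$, noting that $\Htp(\bar\B)$ is again generated by a set (the images of $\clS$, together with all stalk complexes $\cpx I_{B,0}$ for $B$ ranging over a set of objects generating $\bar\B$ as an additive category built from $\B$ — actually it suffices that $\Htp(\B)$ is a full subcategory of $\Htp(\bar\B)$ whose objects are summands of objects from $\B$, so $\Loc\clS$ computed in $\Htp(\bar\B)$ still exhausts the stalk complexes of $\bar\B$). The point of this step is that Lemma~\ref{lem:htp_zero} applies over $\bar\B$, so contractible complexes have the clean structure $\coprod_{n}\cpx I_{X_n,n}$. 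It is also harmless because $\B$ is pure semisimple iff $\bar\B$ is: an additive generator of one gives an additive generator of the other.

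Next I would extract the relevant object. Let $X = \coprod_{S \in \clS}\bigl(\coprod_{n \in \Z} S^n\bigr)$, the coproduct in $\B$ (or $\bar\B$) of all components of all complexes in $\clS$; this is a set-indexed coproduct, hence an honest object. The claim is $\B = \Add X$. To prove it, fix an arbitrary object $B \in \B$ and consider the stalk complex $\cpx I_{B,0}$. Since $\Htp(\B) = \Loc\clS$, the object $\cpx I_{B,0}$ lies in the localizing subcategory generated by $\clS$. Now I would use the standard "Loc as a transfinite filtration" description: $\Loc\clS$ is built from $\clS$ by iterating shifts, coproducts, and mapping cones. The key observation I want is that the stalk complex $\cpx I_{B,0}$ is, up to homotopy, rather rigid: any morphism from it, or to it, in $\Htp$ can be analysed degree by degree, and $\cpx I_{B,0}$ being contractible-free forces any expression of it as a homotopy colimit / iterated cone to "see" $B$ as a summand of a coproduct of components appearing in $\clS$. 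More concretely, I expect to argue that the identity map of $\cpx I_{B,0}$ factors, in $\Htp(\B)$, through a coproduct of (shifts of) objects of $\clS$, hence through a complex whose components all lie in $\Add X$; chasing this factorization in each degree, together with the fact that $\cpx I_{B,0}$ has $B$ in degree $0$ and $0$ elsewhere, yields that $B$ is a summand of a coproduct of copies of $X$, i.e. $B \in \Add X$.

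The cleanest route to "the identity of a stalk complex factors through something with components in $\Add X$" is probably via an induction on the filtration defining $\Loc\clS$, or via the argument that in a triangulated category generated by $\clS$ every object is a homotopy colimit of an iterated extension of coproducts of objects of $\clS$ (Neeman's construction), combined with the fact that stalk complexes are $\aleph_1$-small or, better, with a direct homotopy-theoretic manipulation: realize $\cpx I_{B,0}$ inside a triangle $\coprod\clS\text{-stuff} \to \cpx I_{B,0} \to C \to$, and show the connecting maps can be unwound so that $B$ appears as a summand in degree $0$. I would set this up so that at the end one has explicit split mono/epi pair $j: B \to \coprod X$, $p: \coprod X \to B$ in $\B$ with $pj = 1_B$, using a homotopy like the $s^n$ in Lemma~\ref{lem:htp_zero} to convert a homotopy-level retraction into an honest one in degree $0$.

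The main obstacle I anticipate is precisely this last conversion: a factorization of $1_{\cpx I_{B,0}}$ in $\Htp(\B)$ is only a factorization up to homotopy, and turning it into an honest split monomorphism $B \to (\text{coproduct of components of }\clS)$ in $\B$ requires carefully exploiting that all other degrees of $\cpx I_{B,0}$ vanish, so that the homotopy terms contribute nothing in degree $0$ — this is where the idempotent-completeness of $\bar\B$ (to split the resulting idempotent on $B$) and a Lemma~\ref{lem:htp_zero}-style bookkeeping of the homotopy are both used. Managing set-theoretic size (ensuring $X$ is a genuine object and that the factorization involves only a set's worth of data) is a secondary but routine concern, handled by the choice-for-proper-classes convention mentioned in the preliminaries.
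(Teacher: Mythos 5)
There is a genuine gap at the very step where the hypothesis $\Htp(\B)=\Loc\clS$ is used. Your ``more concrete'' plan rests on the claim that, since $\cpx I_{B,0}\in\Loc\clS$, the identity of $\cpx I_{B,0}$ factors in $\Htp(\B)$ through a coproduct of shifts of objects of $\clS$. As a general principle about objects of $\Loc\clS$ this is false: it would make every object of $\Loc\clS$ a retract of a coproduct of shifts of the generators, whereas $\Loc\clS$ is built by (transfinitely iterated) mapping cones, which create objects that are not such retracts. For instance, in $\Htp(\Ab)$ the complex $\Z\xrightarrow{\,2\,}\Z$ lies in $\Loc\{\Z\}$, being the cone of $2\colon\Z\to\Z$, but it is not a retract of any coproduct of shifts of the stalk complex $\Z$, since on degree-zero homology that would exhibit $\Z/2\Z$ as a direct summand of a free abelian group. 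Your fallback --- induction along the filtration of $\Loc\clS$, or Neeman's homotopy-colimit presentation --- is the right instinct, but you never specify an inductive invariant that is stable under cones, and the property ``the identity factors through a complex with components in $\Add X$'' is not visibly preserved when forming the cone of a map between two objects having it. The invariant that works, and is precisely the paper's key step, is the stronger one: ``isomorphic in $\Htp(\B)$ to a complex all of whose components lie in $\Add X$.'' The full subcategory of such objects is closed under shifts, coproducts and cones (all computed componentwise, and $\Add X$ is closed under finite direct sums and coproducts), and it contains $\clS$ because each component $Y^n$ of each $\cpx Y\in\clS$ is a split summand of $X$; hence it is a localizing subcategory containing $\clS$, so it is all of $\Htp(\B)$, with no transfinite bookkeeping needed.

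The rest of your outline is essentially sound, and your endgame is actually cleaner than the paper's. Once $\cpx I_{B,0}$ is known to be isomorphic in $\Htp(\B)$ to some $\cpx Q\in\Comp(\Add X)$, choose chain maps $f\colon\cpx I_{B,0}\to\cpx Q$ and $g\colon\cpx Q\to\cpx I_{B,0}$ with $gf$ homotopic to the identity; since the components of $\cpx I_{B,0}$ adjacent to degree $0$ vanish (and its differentials are zero), the homotopy contributes nothing in degree $0$, so $g^0f^0=1_B$ on the nose and $B$ is a direct summand of $Q^0$, hence of a coproduct of copies of $X$. This requires no splitting of idempotents and bypasses Lemma~\ref{lem:htp_zero}: the paper instead analyses the contractible mapping cone of $f$ via that lemma, which is exactly why it detours through the idempotent completion, a detour your version would largely render unnecessary (your transfer of pure semisimplicity between $\B$ and $\bar\B$, and the size bookkeeping for $X$, are fine but then not really needed). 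So the proposal is repairable by replacing the false factorization claim with the localizing-subcategory argument above; as written, however, the central step is unjustified.
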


\begin{proof}
Note that we can replace $\clS$ by a singleton $\{\cpx Y\}$; take for instance $\cpx Y = \coprod_{\cpx Z \in \clS} \cpx Z$. Let us denote by $X \in \B$ the coproduct $\coprod_{n \in \Z} Y^n$ of all components of $\cpx Y$. We will show that $\B = \Add X$.
First, we claim that $\Htp(\Add X)$ is a dense subcategory of
$\Htp(\B)$, that is, each object in $\Htp(\B)$ is isomorphic to one in $\Htp(\Add X)$.
Indeed, $\cpx Y \in \Htp(\Add X)$ and one easily checks that the
closure of $\Htp(\Add X)$ under taking isomorphic objects in
$\Htp(\B)$ is a localizing subcategory. Hence
$\Htp(\Add X)$ is dense in $\Htp(\B)$ and the claim is proved.

Suppose for the moment that $\B$ has splitting idempotents. If we identify $\B$ with the full subcategory of $\Htp(\B)$ formed by complexes concentrated in degree zero, we have proved that each object $Z \in \B$ is isomorphic to a complex $\cpx Q \in \Htp(\Add X)$. That is, there is a chain complex homomorphism $f: Z \to \cpx Q$ \st $\cpx Q \in \Comp(\Add X)$ and $f$ becomes an isomorphism in $\Htp(\B)$. In particular, the mapping cone $C_f$ of $f$ is contractible:
$$
C_f: \qquad
\dots \longrightarrow Q^{-3} \overset{d^{-3}}\longrightarrow Q^{-2}
\overset{( ^{d^{-2}}_{\;\;\,0} )}\longrightarrow Q^{-1} \amalg Z
\overset{( d^{-1}, f^0 )}\longrightarrow Q^0
\overset{d^0}\longrightarrow Q^1 \longrightarrow \dots
\qquad
$$
Here, $f^0$ is the degree $0$ component of $f$. Consequently, Lemma~\ref{lem:htp_zero} yields the following commutative diagram in $\B$ with isomorphisms in columns:
$$
\begin{CD}
Q^{-2}  @>{( ^{d^{-2}}_{\;\;\,0} )}>>   Q^{-1} \amalg Z @>{( d^{-1}, f^0 )}>>   Q^0     \\
@V{\cong}VV             @V{\cong}VV             @V{\cong}VV \\
U \amalg V  @>{( ^{0\;1}_{0\;0} )}>>    V \amalg W   @>{( ^{0\;1}_{0\;0} )}>>   W \amalg Z  \\
\end{CD}
$$
It follows that $V,W$ and also $Q^{-1} \amalg Z$ and $Z$ are in $\Add X$. Hence $\B = \Add X$.

Finally, let $\B$ be a general additive category with coproducts and
$\bar\B$ be its idempotent completion. From the fact that $\Htp(\B)$
has splitting idempotents, \cite[1.6.8]{N}, one easily sees that the
full embedding $\Htp(\B) \to \Htp(\bar\B)$ is dense. We already know
that if $\Htp(\B) = \Loc\clS$ for a set $\clS$, then $\bar\B = \Add X$
for some $X \in \bar\B$. In fact, we can take $X \in \B$ by the above
construction. But then clearly $\B = \Add X$ when the additive closure
is taken in $\B$. Hence $\B$ is pure semisimple.
\end{proof}

\begin{rem}
When studying well generated triangulated categories, an important role is played by so-called $\kappa$--localizing subcategories, see \cite{N,Kr}. We recall that given a cardinal number $\kappa$, a \emph{$\kappa$--coproduct} is a coproduct with fewer than $\kappa$ summands. If $\T$ is a triangulated category with arbitrary $\kappa$--coproducts, a thick subcategory $\clL \subseteq \T$ is called \emph{$\kappa$--localizing} if it is closed under taking $\kappa$--coproducts. In this context, one can state the following ``bounded'' version of Theorem~\ref{thm:gen_by_set}:

Let $\kappa$ be an uncountable regular cardinal and $\B$ be an additive category with $\kappa$--coproducts. If $\Htp(\B)$ is generated as a $\kappa$--localizing subcategory by a set $\clS$ of fewer than $\kappa$ objects, then there is $X \in \B$ \st every object of $\B$ is a summand in a $\kappa$--coproduct of copies of $X$.

%For $\kappa = \aleph_0$ one has to be a little more careful. First, one cannot deal with unbounded complexes in general since otherwise one cannot define the object $X \in \B$ as in the proof of Theorem~\ref{thm:gen_by_set}. Second, one cannot deduce that $\Htp^b(\B) \to \Htp^b(\bar\B)$ is a dense embedding. Therefore, the corresponding statement is as follows:
%
%If $\B$ is an additive category with splitting idempotents \st $\Htp^b(\B)$ is generated as a thick subcategory by a single complex $\cpx Y \in \Htp^b(\B)$, then $\B = \add X$ for some $X \in \B$.
\end{rem}

Note that Theorem~\ref{thm:gen_by_set} gives immediately a wide range
of examples of categories which are not well generated. For instance,
$\Htp(\ModR)$ is not well generated for any ring $R$ which is not
right pure semisimple. One can take $R = \Z$ or
$R = k(\cdot \!\rightrightarrows\! \cdot)$,
the Kronecker algebra over a field $k$. The fact that $\Htp(\Ab)$
is not well generated was first observed
by Neeman, \cite[E.3.2]{N}, using different arguments. In fact, we can
state the following proposition, which we later generalize in
Section~\ref{sec:well-gen}:

\begin{prop} \label{prop:well_gen_rings}
Let $R$ be a ring. Then the following are equivalent:
\begin{enumerate}
\item $\Htp(\ModR)$ is well generated;
\item $\Htp(\ModR)$ is compactly generated;
\item $R$ is right pure semisimple.
\end{enumerate}
If $R$ is an artin algebra, the conditions are further equivalent to:
\begin{enumerate}
\item[(4)] $R$ is of finite representation type.
\end{enumerate}
\end{prop}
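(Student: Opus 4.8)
The plan is to prove the cycle of implications $(2) \Rightarrow (1) \Rightarrow (3) \Rightarrow (2)$, with the extra equivalence $(3) \Leftrightarrow (4)$ for artin algebras handled separately by citation. The implication $(2) \Rightarrow (1)$ is immediate from the definitions, since a compactly generated triangulated category is by definition $\aleph_0$--well generated, hence well generated.

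For $(1) \Rightarrow (3)$, I would invoke the machinery already developed. If $\Htp(\ModR)$ is well generated, then by part $(2)$ of Proposition~\ref{prop:brown} it has a set $\clS$ of objects with $\Htp(\ModR) = \Loc\clS$. Theorem~\ref{thm:gen_by_set} then applies with $\B = \ModR$ and tells us that $\ModR$ is pure semisimple in the sense of Definition~\ref{defn:pss}. Finally, Proposition~\ref{prop:pss_ModR} translates this back into $R$ being right pure semisimple.

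For $(3) \Rightarrow (2)$, I would argue that when $R$ is right pure semisimple, every right $R$--module is a direct sum of finitely presented modules, so $\ModR = \Add M$ where $M$ is the direct sum of a representative set of finitely presented modules (this follows from the proof of Proposition~\ref{prop:pss_ModR} together with Kaplansky's theorem). Now one wants to show that $\Htp(\Add M)$, equivalently $\Htp(\ModR)$, is compactly generated. The natural candidate set of compact generators consists of the complexes $\cpx I_{N,0}$ for $N$ finitely presented together with, say, the stalk complexes $N$ concentrated in degree $0$ for $N$ finitely presented; more robustly, one can use the fact that under the pure semisimplicity hypothesis $\ModR$ is equivalent (as an additive category) to $\ProjA$ for $\A$ the category of finitely presented modules, and then appeal to the known description of $\Htp(\ProjA)$ as compactly generated in this situation --- the point being that coproducts and products in $\ModR$ coincide up to the relevant finiteness, so small objects abound. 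Concretely I would check that each stalk complex on a finitely presented module is compact in $\Htp(\ModR)$ (using that a chain map out of it into a coproduct is, in each relevant degree, a map out of a finitely presented module, hence factors through a finite subcoproduct), and that this set generates, i.e. a complex $\cpx X$ with $\Htp(\ModR)(N[n], \cpx X) = 0$ for all finitely presented $N$ and all $n$ must be contractible --- which, after identifying these Hom-groups with homology-type invariants, forces $\cpx X$ to be null-homotopic by Lemma~\ref{lem:htp_zero} once one knows the relevant cycles and boundaries split off, again using pure semisimplicity.

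The main obstacle is precisely this last step: verifying that the stalk complexes on finitely presented modules form a \emph{generating} set of \emph{compact} objects. Compactness is the more delicate half, because a priori a finitely presented module need not be a small object in $\ModR$ with respect to arbitrary coproducts at the level of complexes; here one genuinely uses that right pure semisimplicity makes $\ModR$ behave like a category of projectives over a small additive category, so that finitely presented modules are compact. I expect the cleanest route is to reduce to the statement for $\Htp(\ProjA)$ with $\A = \modR$, invoking the cited results on homotopy categories of projectives (e.g.~\cite{N2, Jo1}), and to note that right pure semisimplicity gives $\ProjA = \ModA$ restricted appropriately, so that the ambient homotopy category is the one we want. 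The equivalence with $(4)$ for artin algebras is then just the classical fact, already quoted after Proposition~\ref{prop:pss_ModR}, that right pure semisimplicity coincides with finite representation type in that setting, via~\cite[Theorem A]{Aus}.
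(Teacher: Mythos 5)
Your treatment of $(2)\Rightarrow(1)$, $(1)\Rightarrow(3)$ and $(3)\Leftrightarrow(4)$ coincides with the paper's: the paper likewise uses that well generated implies generated by a set (Proposition~\ref{prop:brown}), then Theorem~\ref{thm:gen_by_set} and Proposition~\ref{prop:pss_ModR}, and cites \cite[Theorem A]{Aus} for artin algebras. The difference is in $(3)\Rightarrow(2)$: the paper simply quotes Holm and J\o{}rgensen \cite[\S 4 (3), p.~17]{HJ}, whereas you sketch a proof by identifying $\ModR$, for $R$ right pure semisimple, with $\ProjA$ for $\A=\modR$ and invoking Neeman's results on homotopy categories of projectives. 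That route is legitimate and is in fact exactly how the paper later generalizes this proposition (Theorem~\ref{thm:fin_acc_well_gen}, via Propositions~\ref{prop:repres} and~\ref{prop:pss_fin_acc} and Lemmas~\ref{lem:ProjA} and~\ref{lem:coh}); what the citation of \cite{HJ} buys is a self-contained two-out-of-three argument at this point of the paper, while your reduction buys uniformity with the finitely accessible case. Two small corrections to your sketch: to get \emph{compact} (rather than $\aleph_1$--well) generation of $\Htp(\ProjA)$ you must check the coherence-type hypothesis, i.e.\ that $\A=\modR$ has weak cokernels (true, since the cokernel of a map of finitely presented modules is finitely presented, or via Lemma~\ref{lem:coh} since $\ModR$ has products) --- you pass over this silently; and you have the delicacy in the direct argument backwards: stalk complexes of finitely presented modules are compact in $\Htp(\ModR)$ for \emph{any} ring $R$, since a finitely generated module maps into a coproduct only through a finite subcoproduct (and likewise for the homotopies), so the genuinely hard half is that they \emph{generate}, which is where pure semisimplicity enters.
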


\begin{proof}
$(2) \implies (1)$ is clear, as compactly generated is the same as
$\aleph_0$--well generated.
$(1) \implies (3)$ follows by Theorem~\ref{thm:gen_by_set}
and Proposition~\ref{prop:pss_ModR}.
$(3) \implies (2)$ has been proved by Holm and J\o{}rgensen,
\cite[\S 4 (3), p. 17]{HJ}.
Finally, the equivalence between $(3)$ and $(4)$ is due to
Auslander, \cite[Theorem A]{Aus}.
\end{proof}

%BOOKMARK

% -----------------------------------------------------------------------------
\section{Locally well generated triangulated categories}
\label{sec:loc-well-gen}

We have seen in the last section that a triangulated category of the
form $\Htp(\ModR)$ is often not well generated. One might get an
impression that handling such categories is hopeless, but the main
problem here actually is that the category is very big in the sense
that it is not generated by any set. Otherwise, it has a very
reasonable structure. We shall see that it is locally well generated
in the following sense:

\begin{defn} \label{defn:loc_well_gen}
A triangulated category $\T$ with arbitrary coproducts is called \emph{locally well generated} if $\Loc\clS$ is well generated for any set $\clS$ of objects of $\T$.
\end{defn}

In fact, we prove that $\Htp(\ModA)$ is locally well generated for any
skeletally small additive category $\A$. To this end, we first need
to be able to measure the size of modules and complexes.

\begin{defn} \label{defn:card}
Let $\A$ be a skeletally small additive category and $M \in
\ModA$. Recall that $M$ is a contravariant additive functor $\A \to \Ab$
by definition. Then the \emph{cardinality of $M$}, denoted by
$|M|$, is defined as
$$ |M| = \sum_{A \in \clS} |M(A)|, $$
where $|M(A)|$ is just the usual cardinality of the group $M(A)$ and
$\clS$ is a fixed representative set for isomorphism classes of
objects from $\A$. The \emph{cardinality of a complex
$\cpx Y = (Y^n,d^n) \in \Htp(\ModA)$} is defined as
$$ |\cpx Y| = \sum_{n \in \Z} |Y^n|. $$
\end{defn}

It is not so difficult to see that the category of all complexes whose
cardinalities are bounded by a given regular cardinal always gives rise to a
well-generated subcategory of $\Htp(\ModA)$:

\begin{lem} \label{lem:perfect}
Let $\A$ be a skeletally small additive category and $\kappa$ be an infinite cardinal. Then the full subcategory $\clS_\kappa$ formed by all complexes of cardinality less than $\kappa$ meets conditions $(2)$ and $(3)$ of Definition~\ref{defn:well_gen}.

In particular, $\T_\kappa = \Loc \clS_\kappa$ is a $\kappa$--well generated subcategory of $\Htp(\ModA)$ for any regular cardinal $\kappa$.
\end{lem}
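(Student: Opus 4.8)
The statement amounts to three things measured against Definition~\ref{defn:well_gen}: that the class $\clS_\kappa$ satisfies conditions $(2)$ and $(3)$, and --- when $\kappa$ is regular --- that $\T_\kappa = \Loc\clS_\kappa$ is itself $\kappa$--well generated. Conditions $(2)$ and $(3)$ are invariant under replacing objects by isomorphic copies, and the isomorphism classes of complexes over $\ModA$ of cardinality $< \kappa$ form a set (since $\A$ is skeletally small, such a complex is determined up to isomorphism by a bounded amount of set-sized data), so we may and will treat $\clS_\kappa$ as a set. The mechanism used throughout is that any morphism $f\colon \cpx Y \to \coprod_{i\in I}\cpx X_i$ of $\Htp(\ModA)$ is represented by an honest chain map, that coproducts in $\Comp(\ModA)$ are formed in each degree, and that in the functor category $\ModA$ a coproduct $\coprod_{i\in I} M_i$ is computed pointwise and, evaluated at any object of $\A$, is the set of finitely supported tuples inside $\prod_{i\in I} M_i(A)$; in particular the coordinate projections $\pi_i\colon \coprod_{j\in I} M_j \to M_i$ exist in $\ModA$.

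To check condition $(2)$, fix $\cpx Y \in \clS_\kappa$, represent a given morphism $\cpx Y \to \coprod_{i\in I}\cpx X_i$ by a chain map $f$, and for each $n$ let $I_n \subseteq I$ be the union, over all objects $A$ of $\A$ and all $y \in Y^n(A)$, of the finite supports of the elements $f^n_A(y) \in \coprod_i X_i^n(A)$. Because $\sum_A |Y^n(A)| \le |\cpx Y| < \kappa$ for every $n$, the family of such pairs $(A,y)$ has cardinality at most $\mu := \max(|\cpx Y|, \aleph_0)$, whence $|I_n| \le \mu$ and $|I'| \le \mu$ for $I' := \bigcup_{n\in\Z} I_n$; this $I'$ has cardinality $< \kappa$ when $\kappa$ is uncountable, while for $\kappa = \aleph_0$ the complex $\cpx Y$ is finite and $I'$ is finite. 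Since $f$ factors through $\coprod_{i\in I'}\cpx X_i$ already as a chain map, it does so in $\Htp(\ModA)$, so $\cpx Y$ is $\kappa$--small.

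For condition $(3)$, keep $f$ and $\cpx Y$ as above and, for each $i$, set $Y_i^n = \Img\big(\pi_i^n \circ f^n\big) \subseteq X_i^n$. As both $f$ and the chain map $(\pi_i^n)_n\colon \coprod_j\cpx X_j \to \cpx X_i$ commute with differentials, one has $d_{X_i}^n(Y_i^n) = \pi_i^{n+1}f^{n+1}\big(d_Y^n(Y^n)\big) \subseteq Y_i^{n+1}$, so $\cpx Y_i = (Y_i^n, d_{X_i}^n)$ is a subcomplex of $\cpx X_i$; moreover $|\cpx Y_i| = \sum_n |Y_i^n| \le \sum_n |Y^n| = |\cpx Y| < \kappa$, so $\cpx Y_i \in \clS_\kappa$. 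For every $A$ and $y \in Y^n(A)$ each coordinate of $f^n_A(y)$ lies in the corresponding $Y_i^n(A)$, so $f^n(Y^n) \subseteq \coprod_i Y_i^n = (\coprod_i \cpx Y_i)^n$; hence $f$ factors as $\cpx Y \to \coprod_{i\in I}\cpx Y_i \xrightarrow{\coprod_i f_i} \coprod_{i\in I}\cpx X_i$ in $\Comp(\ModA)$, where $f_i$ is the inclusion $\cpx Y_i \hookrightarrow \cpx X_i$ and the first map is $f$ with restricted codomain. Applying the coproduct-preserving functor $\Comp(\ModA) \to \Htp(\ModA)$ yields exactly the factorization demanded by condition $(3)$.

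Finally assume $\kappa$ regular. Since $\clS_\kappa$ is stable under shifts (cardinality is shift-invariant) and $\T_\kappa = \Loc\clS_\kappa$, the full subcategory $\{\,Z \in \T_\kappa : \T_\kappa(Z, X[m]) = 0 \text{ for all } m\in\Z\,\}$ is localizing; if $\T_\kappa(Y, X) = 0$ for all $Y \in \clS_\kappa$ it contains $\clS_\kappa$, hence equals $\T_\kappa$, forcing $1_X = 0$ and $X = 0$. That is condition $(1)$. Conditions $(2)$ and $(3)$ for $\T_\kappa$ with respect to $\clS_\kappa$ are immediate from the above, as $\T_\kappa$ is closed under coproducts in $\Htp(\ModA)$ and any $\cpx X_i \in \T_\kappa$ is in particular an object of $\Htp(\ModA)$ while the resulting $\cpx Y_i$ and $\coprod_i \cpx Y_i$ lie back in $\T_\kappa$. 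Thus $\T_\kappa$ is $\kappa$--well generated. The only place calling for attention is the construction in condition $(3)$: it relies on the fact that, although infinite coproducts in $\ModA$ are not products, they embed in the corresponding products, so the projections $\pi_i$ exist and respect differentials --- which is what makes the $\cpx Y_i$ genuine subcomplexes --- together with keeping the cardinality bounds uniform across all degrees, which the additive definition $|\cpx Y| = \sum_n|Y^n|$ handles for free.
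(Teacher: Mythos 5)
Your proof is correct and follows essentially the same route as the paper: $\kappa$--smallness via componentwise coproducts and finite supports, condition $(3)$ via factoring through the images $\Img(\pi_i\circ f)$ with the cardinality bound $|\Img f_i|\le|\cpx Y|<\kappa$, and condition $(1)$ for $\T_\kappa$ via a localizing-subcategory argument. The only difference is that you spell out more details, in particular invoking shift-stability of $\clS_\kappa$ when verifying condition $(1)$, which is a slightly more careful rendering of the same argument.
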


\begin{proof}
Let $\cpx Y \in \Htp(\ModA)$ \st $|\cpx Y| < \kappa$. If $(\cpx Z_i \mid i \in I)$ is an arbitrary family of complexes in $\Htp(\ModA)$, we can construct their coproduct as a componentwise coproduct in $\Comp(\ModA)$. Then whenever $f: \cpx Y \to \coprod_{i \in I} \cpx Z_i$ is a morphism in $\Comp(\ModA)$, it is straightforward to see that $f$ factorizes through $\coprod_{i \in J} \cpx Z_i$ for some $J \subseteq I$ of cardinality less than $\kappa$. Hence $\cpx Y$ is $\kappa$--small in $\Htp(\ModA)$.

Regarding part $(3)$ of Definition~\ref{defn:well_gen}, consider a
morphism $f: \cpx Y \to \coprod_{i \in I} \cpx Z_i$. We have the
following factorization in the abelian category of complexes $\Comp(\ModA)$:
$$
\cpx Y \overset{(f_i)}\longrightarrow \coprod_{i \in I} \Img f_i \overset{j}\longrightarrow
\coprod_{i \in I} \cpx Z_i.
$$
Here, $f_i: \cpx Y \to \cpx Z_i$ are the compositions of $f$ with the
canonical projections $\pi_i: \coprod_{i' \in I} \cpx Z_{i'} \to \cpx Z_i$,
and $j$ stands for the obvious inclusion. It is easy to see that $|\Img f_i|
< \kappa$ for each $i \in I$ and that the morphism $j$ is a coproduct
of the inclusions $\Img f_i \to \cpx Z_i$. Hence $(3)$ is satisfied.

For the second part, let $\kappa$ be regular and $\T_\kappa = \Loc
\clS_\kappa$. Let us denote by $\clS'$ a representative set of
objects in $\clS_\kappa$. It only remains to prove that $\clS'$
satisfies condition $(1)$ of Definition~\ref{defn:well_gen}, which is
rather easy. Namely, let $\cpx X \in \T_\kappa$ \st
$\T_\kappa(\cpx Y,\cpx X) = 0$ for each $\cpx Y \in \clS'$. Then
$\T' = \{\cpx Y \in \T_\kappa \mid \T_\kappa(\cpx Y,\cpx X) = 0 \}$
defines a localizing subcategory of $\T_\kappa$ containing
$\clS_\kappa$. Hence, $\T' = \T_\kappa$ and $\cpx X = 0$.
\end{proof}

We will also need (a simplified version of) an important result,
which is essentially contained already in~\cite{N}.
It says that the property of being well generated is preserved when
passing to any localizing subcategory generated by a set.
In particular, every well generated category is locally well generated.

\begin{prop} \cite[Theorem 7.2.1]{Kr} \label{prop:well_inherit}
Let $\T$ be a well generated triangulated category and $\clS \subseteq \T$ be a set of objects. Then $\Loc\clS$ is a well generated triangulated category, too.
\end{prop}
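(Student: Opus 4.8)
The plan is to show directly that $\U := \Loc\clS$ admits a set of objects satisfying conditions $(1)$--$(3)$ of Definition~\ref{defn:well_gen} for some regular cardinal; by the definition of well generation (and Proposition~\ref{prop:brown}$(2)$) this is all that is required. The whole argument rests on Neeman's structure theory for a $\kappa$--well generated category $\T$: for every regular cardinal $\alpha \geq \kappa$ there is an essentially small full subcategory $\T^\alpha \subseteq \T$ of \emph{$\alpha$--compact} objects which is closed under suspensions, desuspensions, direct summands, cones and $\alpha$--small coproducts, whose objects are $\alpha$--small, which itself satisfies conditions $(1)$--$(3)$ of Definition~\ref{defn:well_gen} at the level $\alpha$, and which exhausts $\T$ in the sense that $\T = \bigcup_{\alpha \geq \kappa} \T^\alpha$. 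I will take these facts as given.

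First I would fix the cardinal. Since $\T$ is well generated it is $\kappa$--well generated for some regular $\kappa$, and since $\clS$ is a set while $\T = \bigcup_{\alpha} \T^\alpha$ with the subcategories $\T^\alpha$ increasing in $\alpha$, I can choose a regular cardinal $\alpha \geq \kappa$ large enough that $\clS \subseteq \T^\alpha$; I may also assume $\clS$ is closed under suspensions. The generating set I propose for $\U$ is
$$ \mathcal{G} := \U \cap \T^\alpha, $$
which is a set up to isomorphism because $\T^\alpha$ is essentially small, and which is closed under suspensions (both $\U$ and $\T^\alpha$ are) and contains $\clS$.

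Conditions $(1)$ and $(2)$ are then routine. For $(1)$, suppose $X \in \U$ satisfies $\T(G,X) = 0$ for all $G \in \mathcal{G}$. The full subcategory $\{Z \in \U \mid \T(Z[n],X) = 0 \text{ for all } n \in \Z\}$ is triangulated, and it is closed under coproducts because $\T(\coprod_j Z_j, X) \cong \prod_j \T(Z_j, X)$; thus it is localizing. As it contains $\clS$, it must be all of $\U$, and taking $Z = X$ forces $1_X = 0$, i.e.\ $X = 0$. For $(2)$, every $G \in \mathcal{G}$ is $\alpha$--small already in $\T$, and since $\U$ is localizing its coproducts are computed in $\T$, so $G$ remains $\alpha$--small in $\U$.

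The real work is condition $(3)$, which is where I expect the main obstacle. Given $f \colon G \to \coprod_{i} X_i$ with $G \in \mathcal{G}$ and $X_i \in \U$, the factorization property of $\T^\alpha$ in $\T$ produces objects $W_i \in \T^\alpha$, maps $f_i \colon W_i \to X_i$, and a map $h \colon G \to \coprod_i W_i$ with $f = (\coprod_i f_i)\, h$. The difficulty is that a priori $W_i \in \T^\alpha$ but $W_i \notin \U$, so $W_i \notin \mathcal{G}$ and the factorization is not yet internal to $\U$. The crux of the proof is therefore to show that the intermediate objects may be chosen inside $\U$; equivalently, that $\mathcal{G} = \U \cap \T^\alpha$ is again an $\alpha$--perfect class \emph{within} $\U$. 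This I would establish using that $\U$ is generated by $\clS \subseteq \mathcal{G}$ together with the closure properties of $\T^\alpha$: running Neeman's construction of the $\alpha$--compact factorization relative to the subcategory $\U$, the generators $\clS$ guarantee that the approximating objects can be taken in $\U \cap \T^\alpha$. Once condition $(3)$ is verified, $\mathcal{G}$ witnesses that $\U$ is $\alpha$--well generated, completing the proof.
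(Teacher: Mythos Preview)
The paper does not supply a proof of this proposition at all: it is stated with the citation \cite[Theorem~7.2.1]{Kr} and then used as a black box. So there is nothing in the paper to compare your argument against; what you have written is an attempt to reconstruct the content of the cited reference.

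On its own merits, your outline is the natural strategy and the choice $\mathcal{G} = \U \cap \T^\alpha$ is exactly the right candidate, but you have correctly located, and then not closed, the genuine gap. Everything through condition~$(2)$ is fine. For condition~$(3)$ you observe that the $\alpha$--perfect factorization in $\T$ produces intermediate objects $W_i \in \T^\alpha$ that need not lie in $\U$, and then assert that ``running Neeman's construction \dots\ relative to $\U$'' lets one choose them in $\U \cap \T^\alpha$. That last sentence is precisely the theorem you are trying to prove, and it is not a triviality: it amounts to the statement that $(\Loc\clS)^\alpha = \Loc\clS \cap \T^\alpha$, which is the technical heart of the result in \cite{N} and \cite{Kr}. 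The proof in \cite[7.2.1]{Kr} does \emph{not} proceed by a bare-hands verification of condition~$(3)$; it passes through the universal cohomological functor $\T \to \mathrm{A}(\T)$ into a locally presentable abelian category, identifies $\alpha$--compact objects via $\alpha$--presentable objects there, and analyzes the localizing subcategory and the quotient simultaneously. Without invoking that machinery (or an equivalent amount of work from \cite{N}), your sketch of~$(3)$ remains a restatement of the goal rather than an argument.
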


Now, we are in a position to state a theorem which gives us a major
source of examples of locally well generated triangulated categories.

\begin{thm} \label{thm:K_loc_well_gen}
Let $\A$ be a skeletally small additive category. Then the
triangulated category $\Htp(\ModA)$ is locally well generated.
% in the sense of Definition~\ref{defn:loc_well_gen}.
\end{thm}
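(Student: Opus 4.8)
The plan is to show that any localizing subcategory $\Loc\clS$ of $\Htp(\ModA)$ generated by a set $\clS$ is already contained in one of the well generated subcategories $\T_\kappa = \Loc\clS_\kappa$ produced by Lemma~\ref{lem:perfect}, and then to invoke Proposition~\ref{prop:well_inherit}. More precisely, given a set $\clS$ of objects of $\Htp(\ModA)$, I would choose a regular cardinal $\kappa$ larger than the cardinality $|\cpx Y|$ of every $\cpx Y \in \clS$ (this is possible since $\clS$ is a set, so the supremum of these cardinalities exists and we may pick any regular cardinal above it). Then each $\cpx Y \in \clS$ lies in $\clS_\kappa$, hence $\clS \subseteq \T_\kappa$, and therefore $\Loc\clS \subseteq \T_\kappa$ because $\T_\kappa$ is localizing. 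Since $\T_\kappa$ is $\kappa$--well generated by Lemma~\ref{lem:perfect} and $\Loc\clS$ is a localizing subcategory of $\T_\kappa$ generated by the set $\clS$, Proposition~\ref{prop:well_inherit} applied inside $\T_\kappa$ gives that $\Loc\clS$ is well generated. As $\clS$ was an arbitrary set of objects, $\Htp(\ModA)$ is locally well generated by Definition~\ref{defn:loc_well_gen}.

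The one point that needs a small amount of care is the passage from "$\Loc\clS \subseteq \T_\kappa$" to applying Proposition~\ref{prop:well_inherit}: the proposition is stated for a localizing subcategory of a well generated category generated by a set, so I must make sure that $\Loc\clS$ computed inside $\Htp(\ModA)$ coincides with $\Loc\clS$ computed inside $\T_\kappa$. This is immediate because $\T_\kappa$ is closed under shifts, coproducts, and triangle completions in $\Htp(\ModA)$ (it is a localizing subcategory), so the smallest localizing subcategory of $\Htp(\ModA)$ containing $\clS$ is the same as the smallest localizing subcategory of $\T_\kappa$ containing $\clS$. Hence there is no ambiguity, and Proposition~\ref{prop:well_inherit} applies verbatim with $\T$ replaced by $\T_\kappa$.

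I do not expect a serious obstacle here; the argument is essentially a bookkeeping of cardinals combined with the two cited results. The only genuinely substantive inputs are Lemma~\ref{lem:perfect}, which supplies an exhausting chain of well generated subcategories $\T_\kappa$ indexed by regular cardinals, and Proposition~\ref{prop:well_inherit}, which lets us descend well generation along set-generated localizing subcategories; both are already available in the excerpt. If anything, the subtlety is conceptual rather than technical: $\Htp(\ModA)$ itself typically fails to be well generated precisely because it is not $\Loc\clS_\kappa$ for any single $\kappa$ (no set of generators), yet every set-generated localizing subcategory is trapped at some finite stage $\kappa$ of this filtration, which is exactly what "locally well generated" captures.
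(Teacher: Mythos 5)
Your argument is correct and is essentially the paper's own proof: choose a regular cardinal $\kappa$ bounding the cardinalities of the complexes in $\clS$, observe $\clS \subseteq \clS_\kappa$ so $\Loc\clS \subseteq \T_\kappa$, and apply Lemma~\ref{lem:perfect} together with Proposition~\ref{prop:well_inherit}. Your extra remark that $\Loc\clS$ computed in $\Htp(\ModA)$ agrees with the one computed in $\T_\kappa$ is a point the paper leaves implicit, but the route is the same.
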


\begin{proof}
As in Lemma~\ref{lem:perfect}, we denote by $\clS_\kappa$ the
full subcategory of $\Htp(\ModA)$ formed by complexes of cardinality
less than $\kappa$ and put $\T_\kappa = \Loc \clS_\kappa$, the
localizing class generated by $\clS_\kappa$ in $\Htp(\ModA)$.
Then $\T_\kappa$ is
($\kappa$--)well generated for each regular cardinal $\kappa$ by
Lemma~\ref{lem:perfect} and clearly
$$
\Htp(\ModA) = \bigcup_{\kappa \textrm{ regular}} \clS_\kappa =
\bigcup_{\kappa \textrm{ regular}} \T_\kappa.
$$
Now, if $\clS \subseteq \Htp(\ModA)$ is a set of objects, then
$\clS \subseteq \T_\kappa$ for some $\kappa$. Hence also
$\Loc\clS \subseteq \T_\kappa$ and $\Loc\clS$ is well generated by
Proposition~\ref{prop:well_inherit}. It follows that $\Htp(\ModA)$ is
locally well generated.

\end{proof}

Having obtained a large class of examples of locally well generated
triangulated categories, one might ask for some basic properties of
such categories. We will prove a version of the so-called Bousfield
Localization Theorem here:

\begin{prop} \label{prop:bousfield}
Let $\T$ be a locally well generated triangulated category and $\clS \subseteq \T$ be a
set of objects. Then $\T/\Loc\clS$ is a Bousfield localization; that
is, there is a localization functor $L: \T \to \T$ \st
$\Ker L = \Loc\clS$. In particular, we have
$$ \Img L = \{ X \in \T \mid \T(Y,X) = 0 \textrm{ for each } Y \in \clS \}, $$
there is a canonical triangle equivalence between $\T/\Loc\clS$ and $\Img L$
given by the composition
$$
\Img L \overset{\subseteq}\longrightarrow
\T \overset{Q}\longrightarrow \T/\Loc\clS,
$$
and all morphism spaces in $\T/\Loc\clS$ are sets.
\end{prop}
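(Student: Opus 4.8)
The plan is to reduce the statement about the locally well generated category $\T$ to the already-developed machinery for well generated categories by finding a single well generated localizing subcategory that contains everything relevant. First I would enlarge the generating set $\clS$ to a larger set $\clS'$ of objects so that $\U = \Loc\clS'$ is a well generated localizing subcategory of $\T$ which is, moreover, closed under the localization one wants to build. The natural candidate is obtained by iterating: given a well generated subcategory $\U_0 \supseteq \clS$ with regular generating degree $\kappa$, one uses the Brown Representability Theorem (Proposition~\ref{prop:brown}) inside $\U_0$ to produce, for each object, its ``acyclization'' with respect to $\Loc\clS$ computed in $\U_0$; collecting these cofibres together with a set of generators of $\U_0$ gives $\U_1$, and one takes $\U = \Loc(\bigcup_n \U_n)$. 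Since $\T$ is locally well generated and each stage adds only a set of objects, each $\U_n$ and hence $\U$ is again generated by a set, so $\U$ is well generated by hypothesis. By construction $\U$ is closed under the Bousfield localization triangles associated to $\Loc\clS$ within $\U$.

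Next I would invoke the classical Bousfield localization theorem for well generated categories — which is exactly the content of~\cite[Theorem 7.2.1]{Kr} together with the adjoint-functor criterion~\cite[1.6]{AJS} recalled in Section~\ref{sec:prelim} — applied to the well generated category $\U$ and its localizing subcategory $\Loc\clS$ (note $\Loc\clS \subseteq \U$, and $\Loc\clS$ computed in $\U$ equals $\Loc\clS$ computed in $\T$ since both are closures under the same operations). This yields a localization functor $L_0: \U \to \U$ with $\Ker L_0 = \Loc\clS$ and a natural transformation $\eta^0: \mathrm{Id}_\U \to L_0$. The key point of the construction of $\U$ is that $\U$ contains, for every $X \in \T$ — wait; here one must be careful, since $\U$ is not all of $\T$. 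So instead I would proceed as follows: for an arbitrary $X \in \T$, form the set $\clS \cup \{X\}$, let $\U_X = \Loc(\clS \cup \{X\})$, which is well generated by the locally well generated hypothesis, and apply the Bousfield theorem inside $\U_X$ to get a triangle $A_X \to X \to L_X X \to A_X[1]$ with $A_X \in \Loc\clS$ and $\T(\clS, L_X X) = 0$. The object $L_X X$ and this triangle are unique up to canonical isomorphism (standard: the class $\{\T(Y,-)=0 : Y\in\clS\}$ together with $\Loc\clS$ forms a ``torsion pair'' and the triangle is determined), hence independent of the auxiliary category $\U_X$; using the axiom of choice for classes one picks $LX := L_X X$ and assembles $L$ into an exact endofunctor of $\T$ with the required $\eta$.

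Once $L: \T \to \T$ is in hand with $\Ker L = \Loc\clS$, the remaining assertions are formal and follow from the generalities recalled in Section~\ref{sec:prelim}. Namely, $\Img L$ is characterized as $\{X \in \T \mid \T(Y,X) = 0 \text{ for all } Y \in \clS\}$: the inclusion ``$\subseteq$'' holds because $LX$ fits in the triangle $A_X \to X \to LX \to$ with $A_X \in \Loc\clS$ and $\T(\clS, LX) = 0$ is exactly the defining property of $LX$; conversely if $\T(\clS, X) = 0$ then the triangle for $X$ forces $A_X = 0$, so $X \cong LX \in \Img L$. The canonical triangle equivalence $\Img L \xrightarrow{\subseteq} \T \xrightarrow{Q} \T/\Loc\clS$ and the fact that all Hom-spaces in $\T/\Loc\clS$ are sets are then immediate from the cited results~\cite[9.1.16]{N} and~\cite[4.9.1]{Kr}, since $\T/\Ker L \simeq \Img L$ and $\Img L$ is an honest category.

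The main obstacle will be making the construction of $L$ genuinely \emph{functorial} and \emph{independent} of the auxiliary well generated subcategories used to build it: one must check that the localization triangle $A_X \to X \to LX \to A_X[1]$ produced inside any well generated $\U \supseteq \Loc(\clS \cup \{X\})$ does not depend on the choice of $\U$ (this is the uniqueness of the ``colocalization'' triangle, which one gets from the orthogonality $\T(\Loc\clS, \Img L) = 0$ plus the two out of three axiom), and that a morphism $X \to X'$ in $\T$ lifts to a morphism of triangles, giving $L$ on morphisms — here one needs the naturality square inside a single well generated subcategory containing $X$, $X'$ and a cone of the map, which again exists by the locally well generated hypothesis. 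Verifying $L\eta_X = \eta_{LX}$ and that $\eta_{LX}$ is an isomorphism is then routine, using that $L LX \cong LX$ because $LX$ already lies in $\Img L$.
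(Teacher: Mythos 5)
Your argument is correct, but it is organized quite differently from the paper's. The paper uses the locally well generated hypothesis only for the set $\clS$ itself: since $\Loc\clS$ is well generated it satisfies Brown representability (Proposition~\ref{prop:brown}), so by \cite[8.4.4]{N} the inclusion $\Loc\clS \to \T$ has a right adjoint; composing with the inclusion gives a colocalization functor $\Gamma$ with essential image $\Loc\clS$, and the standard construction (\cite[9.1.14]{N} or \cite[4.12.1]{Kr}) converts $\Gamma$ into the desired localization functor $L$ with $\Ker L = \Loc\clS$, the remaining assertions being quoted from \cite[9.1.16]{N} or \cite[4.9.1]{Kr}. You instead build the localization object by object: for each $X \in \T$ you apply the known Bousfield theorem for the well generated category $\Loc(\clS \cup \{X\})$ and then glue the resulting triangles $A_X \to X \to LX \to A_X[1]$ into a functor on $\T$. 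This works, and the uniqueness and functoriality you flag as the main obstacle do follow purely from the orthogonality $\T(A, Z) = 0$ for $A \in \Loc\clS$ and $Z$ with $\T(\clS, Z) = 0$; in particular you do not need to place $X$, $X'$ and a cone of $X \to X'$ inside a common well generated subcategory --- the factorization argument runs directly in $\T$. The trade-off: your route avoids invoking the adjoint-functor statement \cite[8.4.4]{N} for the large ambient category and makes the localization triangles explicit, but it costs extra verifications (independence of the auxiliary subcategory, functoriality, exactness of $L$, the properties of $\eta$) and it uses the hypothesis for every set $\clS \cup \{X\}$, whereas the paper's argument needs only that $\Loc\clS$ is well generated and hence proves a formally stronger statement. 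Finally, the iterative construction of $\U = \Loc(\bigcup_n \U_n)$ in your first paragraph is abandoned mid-proof and plays no role in the final argument; it should simply be deleted.
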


\begin{proof}
The proof is rather standard.
$\Loc\clS$ is well generated, so it satisfies the
Brown Representability Theorem (see Proposition~\ref{prop:brown}).
Hence the inclusion $\mathbf{i}: \Loc\clS \to \T$ has a right adjoint
by~\cite[8.4.4]{N}. The composition of this right adjoint with
$\mathbf{i}$ gives a so-called colocalization functor
$\Gamma: \T \to \T$ whose essential image is equal to $\Loc\clS$.
The definition of a colocalization functor is formally dual to the one
of a localization functor; see~\cite[\S 4.12]{Kr} for details. 
A well-known construction then yields a localization functor
$L: \T \to \T$ \st $\Ker L = \Loc\clS$. We refer to~\cite[9.1.14]{N}
or~\cite[4.12.1]{Kr} for details. The rest follows from
\cite[9.1.16]{N} or \cite[4.9.1]{Kr}.
\end{proof}

\begin{rem}
Proposition~\ref{prop:bousfield} has been proved before for well
generated triangulated categories. This is implicitly contained for
example in~\cite[\S 7.2]{Kr}. It also generalizes more
classical results, such as a corresponding statement for the derived
category $\Der(\B)$ of a Grothendieck abelian category $\B$,
\cite[5.7]{AJS}. To see this, one only needs to observe that $\Der(\B)$
is well generated, see~\cite[Example 7.7]{Kr}.
\end{rem}

An obvious question is whether the Brown Representability Theorem also
holds for locally well generated categories, as this was the crucial
feature of well generated categories. Unfortunately, this is not the
case in general, as the following example suggested by Henning Krause
shows.

\begin{expl} \label{expl:brow_rep_failure}
According to~\cite[Exercise 1, p. 131]{F}, one can construct an
abelian category $\B$ with some $\Ext$-spaces being proper
classes. Namely, let $U$ be the class of all cardinals, and let $\B =
\Modu{\Z\spanned{U}}$, the category of all ``modules over the free ring on
the proper class of generators $U$.'' That is, an object $X$ of $\B$ is an
abelian group \st each $\kappa \in U$ has a $\Z$-linear action on $X$
and this action is trivial for all but a set of cardinals. Such a
category admits a valid set-theoretical description in ZFC. If we
denote by $\Z$ the object of $\B$ whose underlying group is free of
rank $1$ and $\kappa \cdot \Z = 0$ for each $\kappa \in U$, then
$\Ext^1_\B(\Z,\Z)$ is a proper class (see also~\cite[4.15]{Kr}
or~\cite[1.1]{CN}).

Given the above description of objects of $\B$, one can easily adjust
the proof of Theorem~\ref{thm:K_loc_well_gen} to see that $\Htp(\B)$
is locally well generated. Let $\Htp_\mathrm{ac}(\B)$ stand for the full
subcategory of all acyclic complexes in $\Htp(\B)$. Then
$\Htp_\mathrm{ac}(\B)$ is clearly a localizing subcategory of
$\Htp(\B)$, hence locally well-ge\-ne\-ra\-ted.

It has been shown in~\cite{CN} that $\Htp_\mathrm{ac}(\B)$ does not
satisfy the Brown Representability Theorem. In fact, one proved even
more: $\Htp_\mathrm{ac}(\B)$ is localizing in $\Htp(\B)$, but it is
not a kernel of any localization functor $L: \Htp(\B) \to \Htp(\B)$.
More specifically, the composition of functors, the second of which
is contravariant,
$$
\begin{CD}
\Htp_\mathrm{ac}(\B)  @>{\subseteq}>>  \Htp(\B)  @>{\Htp(\B)(-,\Z)}>>  \Ab
\end{CD}
$$
is not representable by any object of $\Htp_\mathrm{ac}(\B)$.
\end{expl}

Yet another natural question is what other triangulated categories are
locally well generated. A deeper analysis of this problem is left for
future research, but we will see in Section~\ref{sec:fin_acc} that
$\Htp(\B)$ is locally well generated for any finitely accessible
additive category $\B$. For now, we will
prove that the class of locally well generated triangulated categories
is closed under some natural constructions. Let us start with a general
lemma, which holds even if morphism spaces in the quotient $\T/\clL$
are proper classes:

\begin{lem} \label{lem:loc}
Let $\T$ be a triangulated category and $\clL \subseteq \clL'$ be two
localizing subcategories of $\T$. Then $\clL'/\clL$ is a localizing
subcategory of $\T/\clL$.
\end{lem}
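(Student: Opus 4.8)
The plan is to verify directly that $\clL'/\clL$, viewed as the full subcategory of $\T/\clL$ on the objects of $\clL'$, is a triangulated subcategory closed under coproducts. First I would note that $\clL'/\clL$ is by construction a full subcategory of $\T/\clL$ which is closed under shifts, since both $\clL'$ and the shift functor descend from $\T$. The main point is to check that $\clL'/\clL$ is a triangulated subcategory, i.e.\ closed under triangle completions, and that it is closed under the coproducts that exist in $\T/\clL$.

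For the triangulated-subcategory part, I would take a morphism $f\colon X \to Y$ in $\T/\clL$ with $X, Y \in \clL'$ and complete it to a triangle in $\T/\clL$. Recall that morphisms in $\T/\clL$ are represented by roofs, and by the standard calculus of fractions for Verdier quotients a morphism between objects of $\clL'$ can be represented by a roof $X \xleftarrow{s} X' \xrightarrow{g} Y$ where the cone of $s$ lies in $\clL$; since $\clL \subseteq \clL'$ and $X \in \clL'$, also $X' \in \clL'$ (as $\clL'$ is thick, being localizing). Completing $g$ to a triangle $X' \to Y \to Z \to X'[1]$ in $\T$ forces $Z \in \clL'$ because $\clL'$ is a triangulated subcategory. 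The image of this triangle under the quotient functor $Q$ is a triangle in $\T/\clL$ which, after identifying $Q X'$ with $X$ via the isomorphism induced by $s$, is a triangle built on $f$ with all vertices in $\clL'$. Since every triangle in $\T/\clL$ on $f$ is isomorphic to this one, $\clL'/\clL$ is closed under triangle completion; it is plainly closed under the $[\pm 1]$ shifts, so it is a triangulated subcategory of $\T/\clL$.

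For closure under coproducts, I would use that $Q\colon \T \to \T/\clL$ preserves coproducts. Given a family $(X_i)_{i\in I}$ of objects of $\clL'$, the coproduct $\coprod_i X_i$ formed in $\T$ lies in $\clL'$ because $\clL'$ is localizing; applying $Q$ gives $Q(\coprod_i X_i) \cong \coprod_i Q X_i$ in $\T/\clL$, so this coproduct is (isomorphic to) an object of $\clL'/\clL$. Hence $\clL'/\clL$ is closed under coproducts, which combined with the previous paragraph shows it is a localizing subcategory of $\T/\clL$.

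The step I expect to require the most care is the first one of the second paragraph: making sure the roof representation of a morphism between objects of $\clL'$ can be chosen with the ``denominator'' object $X'$ again in $\clL'$, and that the triangle obtained in $\T$ and pushed down via $Q$ genuinely realizes the given morphism $f$ up to isomorphism of triangles in $\T/\clL$. This is where one invokes the explicit description of $\T/\clL$ and the fact that $\clL'$ is thick; everything else is a routine application of exactness of $Q$ and coproduct-preservation. One should also keep in mind the caveat already flagged in the excerpt that the Hom-classes in $\T/\clL$ need not be sets, but this causes no trouble here since we only manipulate individual morphisms and triangles, never form a Hom-set.
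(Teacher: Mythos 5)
Your verification is correct in substance, and it is essentially a hands-on unpacking of what the paper handles by citation: the paper invokes Verdier's Th\'eor\`eme 4-2 (or \cite[1.6.5]{KaSch}) for the fact that $\clL'/\clL$ sits inside $\T/\clL$ as a full subcategory closed under isomorphisms, and then remarks that the remaining closure properties follow from the construction of the quotient ($Q$ is the identity on objects, exact, and preserves coproducts). Your roof computation for triangle completion is precisely the ``fullness'' half of that citation, and your coproduct argument is the paper's ``the rest follows directly from the construction''.

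Two points should be tightened. First, in the conventions in force here (Neeman's), triangulated and localizing subcategories are closed under isomorphisms, and your sentence ``every triangle in $\T/\clL$ on $f$ is isomorphic to this one'' silently relies on exactly that repleteness: you exhibit one cone of $f$ lying in $\clL'$, but an arbitrary cone is only isomorphic to it in $\T/\clL$. So you should add the easy check, which is the very point the paper's proof singles out, that the full subcategory on the objects of $\clL'$ is isomorphism-closed in $\T/\clL$: given an isomorphism $QX \cong QY$ with $Y \in \clL'$, represent it by a roof $X \overset{s}\leftarrow W \overset{g}\to Y$ with $\mathrm{cone}(s) \in \clL$; then $Qg$ is an isomorphism, hence $\mathrm{cone}(g) \in \clL$ because $\clL$ is thick (it is localizing in a category with coproducts, \cite[1.6.8]{N}), and two applications of the two-out-of-three property give $W \in \clL'$ and then $X \in \clL'$. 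Second, strictly speaking $\clL'/\clL$ denotes the Verdier quotient, and the lemma as used later (together with \cite[7.2.1]{Kr}) asserts that the canonical functor $\clL'/\clL \to \T/\clL$ is fully faithful with localizing image; your reading of $\clL'/\clL$ as ``the full subcategory on the objects of $\clL'$'' is fine once this identification is noted---fullness is your roof argument (the roof already lives in $\clL'$ since $X' \in \clL'$), and faithfulness is proved the same way or cited as in the paper. A small misattribution: $X' \in \clL'$ follows because $\clL'$ is a triangulated subcategory (rotate the triangle $X' \overset{s}\to X \to \mathrm{cone}(s)$ with $\mathrm{cone}(s) \in \clL \subseteq \clL'$), not because $\clL'$ is thick; thickness is not what is used there.
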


\begin{proof}
It is easy to see that $\clL'/\clL$ is a full subcategory of $\T/\clL$ which is closed under taking isomorphic objects, see~\cite[Th\'eor\`eme 4-2]{V} or~\cite[Proposition 1.6.5]{KaSch}. The rest follows directly from the construction of $\T/\clL$.
\end{proof}

Now we can show that taking localizing subcategories and localizing
with respect to a set of objects preserves the locally well generated
property.

\begin{prop} \label{prop:loc_well_inherit}
Let $\T$ be a locally well generated triangulated category.
\begin{enumerate}
\item Any localizing subcategory $\clL$ of $\T$ is itself locally well
  generated.
\item The Verdier quotient $\T/\Loc\clS$ is locally well generated for
  any set $\clS$ of objects in $\T$.
\end{enumerate}
\end{prop}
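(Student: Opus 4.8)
The plan is to reduce both statements to the definition of locally well generated together with Proposition~\ref{prop:well_inherit}. For part $(1)$, let $\clL \subseteq \T$ be a localizing subcategory and let $\clS$ be a set of objects of $\clL$. Then $\clS$ is also a set of objects of $\T$, and since $\T$ is locally well generated, $\Loc_\T \clS$ is well generated. The key observation is that the localizing subcategory generated by $\clS$ does not depend on whether we form it inside $\clL$ or inside $\T$: because $\clL$ is closed under shifts, coproducts and triangle completions in $\T$, the smallest such subcategory of $\T$ containing $\clS$ already lies in $\clL$, so $\Loc_\clL \clS = \Loc_\T \clS$. Hence $\Loc_\clL \clS$ is well generated, and as $\clS$ was arbitrary, $\clL$ is locally well generated.

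For part $(2)$, write $\clL = \Loc\clS$ and $Q\colon \T \to \T/\clL$ for the canonical functor. First I would note that by Proposition~\ref{prop:bousfield} the quotient $\T/\clL$ has small hom-sets (and the other conclusions of that proposition), so it is a genuine triangulated category with arbitrary coproducts, and $Q$ preserves coproducts. Now let $\clR$ be any set of objects of $\T/\clL$; I must show $\Loc_{\T/\clL}\clR$ is well generated. Using the equivalence $\T/\clL \simeq \Img L$ from Proposition~\ref{prop:bousfield}, each object of $\clR$ is isomorphic to $LX$ for some $X \in \T$; collecting these lift the set $\clR$ to a set $\clR' = \{X_r\} \subseteq \T$ with $Q\clR' = \clR$ up to isomorphism. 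Since $\T$ is locally well generated, $\Loc_\T(\clS \cup \clR')$ is well generated, and then by Proposition~\ref{prop:well_inherit}, so is $\Loc_\T(\clR')$ — but the cleaner route is to argue directly that $Q(\Loc_\T\clR')$ is (equivalent to) a well generated category. Concretely, the restriction of $Q$ to $\Loc_\T(\clS \cup \clR')$ is an exact coproduct-preserving functor onto a category equivalent to $\Loc_{\T/\clL}\clR$, and one checks that a set of $\kappa$-generators for $\Loc_\T(\clS\cup\clR')$ — which exists by local well generation of $\T$ — maps under $Q$ to a set satisfying conditions $(1)$, $(2)$, $(3)$ of Definition~\ref{defn:well_gen} in $\Loc_{\T/\clL}\clR$, after possibly enlarging $\kappa$. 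Condition $(2)$ is immediate since $Q$ preserves coproducts; condition $(3)$ likewise transports along $Q$; condition $(1)$ follows because any object of $\Loc_{\T/\clL}\clR$ orthogonal to the images of the generators lifts, via the adjunction of Proposition~\ref{prop:bousfield}, to an object of $\T$ orthogonal to the original generators, hence zero.

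The main obstacle I anticipate is verifying condition $(3)$ of Definition~\ref{defn:well_gen} after passing to the quotient: while $Q$ preserves coproducts, a coproduct decomposition of a morphism $Qf$ in $\T/\clL$ need not a priori come from such a decomposition in $\T$. The clean way around this is to work inside $\Loc_\T(\clS \cup \clR')$, which is already well generated by hypothesis, take there a set of $\kappa$-good generators, and use that the composite $\Loc_\T(\clS \cup \clR') \to \T/\clL$ identifies (up to the equivalence $\T/\clL \simeq \Img L$) the quotient $\Loc_\T(\clS\cup\clR')/\clL$ with $\Loc_{\T/\clL}\clR$ via Lemma~\ref{lem:loc}; the transport of generators along a Verdier quotient of a well generated category by a localizing subcategory generated by (part of) those generators is precisely the content of the proof of Proposition~\ref{prop:well_inherit}, so the argument there applies verbatim. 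A secondary subtlety is the set-theoretic one flagged in the Preliminaries — choosing lifts $X_r$ of the objects of $\clR$ — which is handled by the standing global choice assumption.
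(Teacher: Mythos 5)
Your proposal, in its final ``clean'' form, is correct and essentially the paper's own argument: part (1) is immediate since $\Loc_{\clL}\clS=\Loc_{\T}\clS$ for a localizing $\clL$, and for part (2) you enlarge to $\clL'=\Loc_\T(\clS\cup\clR')$, identify its image in $\T/\clL$ with $\Loc_{\T/\clL}\clR$ via Lemma~\ref{lem:loc}, and invoke the fact that a well generated category modulo a localizing subcategory generated by a set is again well generated. Two small remarks: that last fact is the quotient half of the cited theorem \cite[7.2.1]{Kr} (the paper quotes it directly) rather than the statement of Proposition~\ref{prop:well_inherit}, and your lifting of $\clR$ through $\Img L$ (and hence the appeal to Proposition~\ref{prop:bousfield}, and your first attempt at transporting generators along $Q$, where condition (3) of Definition~\ref{defn:well_gen} would indeed be troublesome) is unnecessary, since objects of the Verdier quotient $\T/\clL$ are by construction the objects of $\T$.
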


\begin{proof}
$(1)$ is trivial. For $(2)$, put $\clL = \Loc\clS$ and consider a set
$\C$ of objects in $\T/\clL$. We have to prove that the localizing
subcategory generated by $\C$ in $\T/\clL$ is well generated.
Since the objects of $\T$ and $\T/\clL$ coincide by definition, we can
consider a localizing subcategory $\clL' \subseteq \T$ defined by
$\clL' = \Loc (\clS \cup \C)$. One easily sees using
Lemma~\ref{lem:loc} that $\clL'/\clL = \Loc\C$ in $\T/\clL$. Since
both $\clL$ and $\clL'$ are well generated by definition, so is
$\clL'/\clL$ by~\cite[7.2.1]{Kr}. Hence $\T/\clL$ is locally well
generated.
\end{proof}

We conclude this section with an immediate consequence of
Theorem~\ref{thm:K_loc_well_gen} and
Proposition~\ref{prop:loc_well_inherit}, which will be useful in the
next section:

\begin{cor} \label{cor:add_well_gen}
Let $\A$ be a small additive category and $\B$ be a full subcategory of $\ModA$ which is closed under arbitrary coproducts. Then $\Htp(\B)$ is locally well generated.
\end{cor}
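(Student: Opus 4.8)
The plan is to realize $\Htp(\B)$, up to triangle equivalence, as a localizing subcategory of $\Htp(\ModA)$, and then quote Theorem~\ref{thm:K_loc_well_gen} together with Proposition~\ref{prop:loc_well_inherit}(1). First I would record that $\B$ is in fact an additive subcategory of $\ModA$: it contains a zero object (the empty coproduct) and is closed under finite coproducts, which coincide with finite biproducts in $\ModA$. Consequently $\Comp(\B)$ is a full subcategory of $\Comp(\ModA)$ closed under shifts, mapping cones and arbitrary coproducts, since all three constructions are carried out degreewise and their degree-$n$ components are finite biproducts, respectively arbitrary coproducts, of components of the input complexes. In particular $\Htp(\B)$ is a triangulated category with arbitrary coproducts, and the canonical functor $\Htp(\B) \to \Htp(\ModA)$ is exact, coproduct-preserving, and fully faithful --- fully faithful because $\B$ is a full subcategory of $\ModA$, so a chain map between complexes from $\Comp(\B)$ that is null-homotopic in $\Comp(\ModA)$ is already null-homotopic in $\Comp(\B)$.

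Next, let $\clL \subseteq \Htp(\ModA)$ be the full subcategory of all complexes that are isomorphic in $\Htp(\ModA)$ to a complex lying in $\Comp(\B)$. From the description above it follows routinely that $\clL$ contains $0$ and is closed under shifts, mapping cones and coproducts, so $\clL$ is a localizing subcategory of $\Htp(\ModA)$. Moreover the factorization $\Htp(\B) \hookrightarrow \clL \hookrightarrow \Htp(\ModA)$ exhibits $\Htp(\B) \hookrightarrow \clL$ as a fully faithful exact functor which is essentially surjective by the very definition of $\clL$; hence it is a triangle equivalence, and it clearly preserves coproducts.

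Finally, $\Htp(\ModA)$ is locally well generated by Theorem~\ref{thm:K_loc_well_gen}, so its localizing subcategory $\clL$ is locally well generated by Proposition~\ref{prop:loc_well_inherit}(1). Since a coproduct-preserving triangle equivalence sends $\Loc\clS$ to $\Loc\clS$ and preserves well generation, the locally well generated property transports from $\clL$ to $\Htp(\B)$, which is what we want. The only point that needs a little care is why one passes to the isomorphism-closure $\clL$ instead of arguing with $\Htp(\B)$ directly: $\Htp(\B)$ need not be closed under isomorphisms inside $\Htp(\ModA)$ unless $\B$ is closed under direct summands, as an isomorphic representative of a complex from $\Comp(\B)$ may pick up contractible summands $\cpx I_{X,n}$ with $X \notin \B$. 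Everything else is a direct unwinding of the degreewise formulas for shifts, cones and coproducts.
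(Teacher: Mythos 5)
Your argument is correct and is essentially the paper's intended route: the corollary is stated there as an immediate consequence of Theorem~\ref{thm:K_loc_well_gen} and Proposition~\ref{prop:loc_well_inherit}(1), i.e.\ one regards $\Htp(\B)$ as (equivalent to) a localizing subcategory of $\Htp(\ModA)$ and transfers the locally well generated property. Your additional care with the isomorphism closure $\clL$ merely makes explicit a point the paper leaves implicit, and it is handled correctly.
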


% -----------------------------------------------------------------------------
\section{Finitely accessible additive categories}
\label{sec:fin_acc}

There is a natural generalization of module categories, namely the additive version of finitely accessible categories in the terminology of~\cite{AR}. As we have seen, there is quite a lot of freedom to choose $\B$ in the above Corollary~\ref{cor:add_well_gen}. We will use this fact and a standard trick to (seemingly) generalize Theorem~\ref{thm:K_loc_well_gen} from module categories to finitely accessible additive categories. We start with a definition.

\begin{defn} \label{defn:fin_acc}
Let $\B$ be an additive category which admits arbitrary filtered colimits. Then:
\begin{itemize}
\item An object $X \in \B$ is called \emph{finitely presentable} if the representable functor $\B(X,-): \B \to \Ab$ preserves filtered colimits.
\item The category $\B$ is called \emph{finitely accessible} if there is a set $\A$ of finitely presentable objects from $\B$ \st every object in $\B$ is a filtered colimit of objects from $\A$.
\end{itemize}
\end{defn}

Note that if $\B$ is finitely accessible, the full subcategory $\fpB$
of $\B$ formed by all finitely presentable objects in $\B$ is
skeletally small, \cite[2.2]{AR}. Several other general properties
of finitely accessible categories will follow from
Proposition~\ref{prop:repres}.

Finitely accessible categories occur at many occasions. The simplest
and most natural example is the module category $\ModR$ over an
associative unital ring. It is well-known that finitely presentable
objects in $\ModR$ coincide with finitely presented $R$--modules in the
usual sense. The same holds for $\ModA$, the
category of modules over a small additive category $\A$.
Motivated by representation theory, finitely
accessible categories were studied by Crawley-Boevey~\cite{CB}
under the name locally finitely presented
categories; see~\cite[\S 5]{CB} for further examples.
The term from~\cite{CB}, however, may cause some confusion in the light
of other definitions. Namely, Gabriel and Ulmer~\cite{GU} have defined
the concept
of a \emph{locally finitely presentable} category which is, in our
terminology, a cocomplete finitely accessible category. As the latter concept
has been used quite substantially in one of our main references, \cite{Kr},
we stick to the terminology of~\cite{AR}.

The crucial fact about finitely accessible additive categories is the following representation theorem:

\begin{prop} \label{prop:repres}
The assignments
$$ \A \mapsto \FlatA \quad \textrm{ and } \quad \B \mapsto \fpB $$
form a bijective correspondence between
\begin{enumerate}
\item equivalence classes of skeletally small additive categories $\A$ with splitting idempotents, and
\item equivalence classes of additive finitely accessible categories $\B$.
\end{enumerate}
\end{prop}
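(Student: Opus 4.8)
The plan is to establish the bijective correspondence of Proposition~\ref{prop:repres} by showing that the two assignments are mutually inverse, which amounts to proving two things: first, that for any skeletally small additive category $\A$ with splitting idempotents, the category $\FlatA$ is finitely accessible with $\fp(\FlatA) \simeq \A$; and second, that for any finitely accessible additive category $\B$, the category $\fpB$ has splitting idempotents and the canonical functor $\B \to \Modu{(\fpB)}$ induces an equivalence $\B \simeq \hbox{\rm Flat-}(\fpB)$. Both halves are classical, going back essentially to work of Crawley-Boevey~\cite{CB} and to the general theory of accessible categories in~\cite{AR}, so the task is mainly to assemble the argument in the present additive setting.

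For the first half I would proceed as follows. Given $\A$ skeletally small additive with splitting idempotents, recall that $\FlatA$ is the category of flat right $\A$-modules, i.e.\ filtered colimits of representable (hence finitely generated projective) functors; since $\A$ has splitting idempotents, the representables are exactly the finitely generated projective $\A$-modules up to summands, so $\FlatA$ is closed under filtered colimits in $\ModA$ and every object of $\FlatA$ is a filtered colimit of representables. It remains to identify the finitely presentable objects of $\FlatA$ with the representables: the representables $\A(-,A)$ are finitely presentable in $\ModA$, hence a fortiori in the full subcategory $\FlatA$ closed under filtered colimits; conversely, if $M \in \FlatA$ is finitely presentable, writing $M$ as a filtered colimit of representables and applying $\FlatA(M,-)$ to that colimit shows that $\mathrm{id}_M$ factors through one of the representables, so $M$ is a summand of a representable and therefore, by splitting idempotents in $\A$, itself representable. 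This gives $\fp(\FlatA) \simeq \A$ and shows $\FlatA$ is finitely accessible.

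For the second half, let $\B$ be finitely accessible and set $\A = \fpB$, which is skeletally small by~\cite[2.2]{AR}. One checks first that $\A$ has splitting idempotents: an idempotent on a finitely presentable object splits in $\B$ because $\B$, having filtered colimits, has splitting idempotents (a standard fact, or one invokes that a retract of a finitely presentable object is finitely presentable). Next, consider the restricted Yoneda functor $T\colon \B \to \ModA$, $X \mapsto \B(-,X)|_\A$. It is full and faithful on $\B$ (this is the key point — it follows from every object of $\B$ being a filtered colimit of objects of $\A$ together with the $\A$ being finitely presentable, so that $\B(A, \colim X_i) = \colim \B(A, X_i)$), it sends filtered colimits to filtered colimits, and it carries objects of $\A$ to representables. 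Hence $T$ lands in $\FlatA$ and one verifies it is essentially surjective onto $\FlatA$: any flat $\A$-module is a filtered colimit of representables, this diagram lifts to a filtered diagram in $\A \subseteq \B$, and since $T$ preserves filtered colimits the colimit in $\B$ maps to the given flat module. Thus $T\colon \B \xrightarrow{\ \sim\ } \FlatA$, and the two constructions are inverse to each other up to the evident natural equivalences.

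The main obstacle I anticipate is the full faithfulness of the restricted Yoneda embedding $T\colon \B \to \FlatA$, and more precisely the verification that morphisms between arbitrary objects of $\B$ are recovered from their restrictions to $\fpB$. Faithfulness is fairly direct from the fact that $\B(A,-)$ preserves filtered colimits for $A$ finitely presentable and that every object is a filtered colimit of finitely presentable ones; fullness is subtler and needs a careful colimit argument (one presents both the source and target as filtered colimits of finitely presentable objects and uses that a natural transformation of the restricted functors induces a compatible family of morphisms on the diagram, then passes to the colimit), exactly as in~\cite[\S2]{CB}. Everything else — splitting idempotents, preservation of filtered colimits, essential surjectivity — is comparatively routine once this embedding theorem is in place.
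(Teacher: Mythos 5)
Your proof is correct and coincides with the classical argument the paper relies on: the paper's proof of this proposition consists solely of the citation \cite[\S 1.4]{CB}, and your reconstruction---identifying the finitely presentable objects of $\FlatA$ with the representables via splitting idempotents, and showing that the restricted Yoneda functor from $\B$ to flat $\fpB$-modules is an equivalence preserving filtered colimits---is exactly Crawley-Boevey's representation theorem invoked there. No gaps to report.
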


\begin{proof}
See \cite[\S 1.4]{CB}.
\end{proof}

\begin{rem}
The correspondence from Proposition~\ref{prop:repres} restricts, using~\cite[\S 2.2]{CB},
to a bijection between equivalence classes of skeletally small
additive categories with finite colimits (equivalently, with cokernels) and
equivalence classes of locally finitely presentable
categories in the sense of Gabriel and Ulmer~\cite{GU}.
% In fact, finitely accessible categories strictly generalize locally
% finitely presentable categories as the following example shows:
\end{rem}

One of the main results of this paper has now become a mere corollary of preceding results:

\begin{thm} \label{thm:fin_acc_loc_well_gen}
Let $\B$ be a finitely accessible additive category. Then $\Htp(\B)$ is locally well generated.
\end{thm}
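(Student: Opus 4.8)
The plan is to reduce the statement for a general finitely accessible additive category $\B$ to the already-established Corollary~\ref{cor:add_well_gen}, which handles full subcategories of $\ModA$ closed under coproducts. The bridge is Proposition~\ref{prop:repres}: since $\B$ is finitely accessible, there is a skeletally small additive category $\A$ with splitting idempotents (namely $\A = \fpB$, or any skeleton thereof) such that $\B$ is equivalent to $\FlatA$. So up to equivalence we may assume $\B = \FlatA$, the full subcategory of flat functors inside $\ModA$.

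First I would record that $\FlatA$ is closed under arbitrary coproducts in $\ModA$: a coproduct of flat functors is flat, since filtered colimits commute with coproducts and each flat functor is a filtered colimit of representables (equivalently, flatness is detected by the vanishing of $\Tor$, which commutes with coproducts). This is the one small verification needed, and it is entirely standard. With this in hand, $\B \simeq \FlatA$ is exactly a full subcategory of $\ModA$ closed under coproducts, so Corollary~\ref{cor:add_well_gen} applies directly and gives that $\Htp(\FlatA)$ is locally well generated. Transporting along the equivalence $\B \simeq \FlatA$ (which induces a triangle equivalence $\Htp(\B) \simeq \Htp(\FlatA)$, and the locally well generated property is manifestly invariant under triangle equivalence) finishes the proof.

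There is essentially no hard step here — the theorem really is, as the text says, a mere corollary. The only point requiring a moment's care is making sure the hypotheses of Corollary~\ref{cor:add_well_gen} are met: that $\A$ is genuinely skeletally small (guaranteed by \cite[2.2]{AR}, cited after Definition~\ref{defn:fin_acc}, since $\fpB$ is skeletally small) and that $\FlatA$ is coproduct-closed in $\ModA$. If one wanted to be scrupulous, one would also note that the equivalence $\B \simeq \FlatA$ from Proposition~\ref{prop:repres} is an equivalence of additive categories preserving the relevant structure (coproducts), so that it lifts to the homotopy categories of complexes; but this is automatic since $\Comp(-)$ and $\Htp(-)$ are functorial in additive functors and send equivalences to equivalences.

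In summary: given $\B$ finitely accessible, invoke Proposition~\ref{prop:repres} to write $\B \simeq \FlatA$ for a suitable skeletally small additive $\A$; observe $\FlatA$ is a coproduct-closed full subcategory of $\ModA$; apply Corollary~\ref{cor:add_well_gen} to conclude $\Htp(\FlatA)$ — and hence $\Htp(\B)$ — is locally well generated. The expected obstacle is nil; the content of the theorem lies entirely in the earlier results it assembles.
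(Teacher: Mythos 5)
Your proposal is correct and follows exactly the paper's own argument: set $\A = \fpB$, use Proposition~\ref{prop:repres} to identify $\B$ with $\FlatA$, and apply Corollary~\ref{cor:add_well_gen}. The extra verifications you flag (coproduct-closure of $\FlatA$ in $\ModA$ and invariance under triangle equivalence) are routine and implicitly assumed in the paper's two-line proof.
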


\begin{proof}
Let us put $\A = \fpB$, the full subcategory of $\B$ formed by all finitely presentable objects. Using Proposition~\ref{prop:repres}, we see that $\B$ is equivalent to the category $\FlatA$. The category $\Htp(\FlatA)$ is locally well generated by Corollary~\ref{cor:add_well_gen}, and so must be $\Htp(\B)$.
\end{proof}

The remaining question when $\Htp(\B)$ is $\kappa$--well generated and which cardinals $\kappa$ can occur will be answered in the next section. For now, we know by Theorem~\ref{thm:gen_by_set} that a necessary condition is that $\B$ be pure semisimple. In fact, we will show that this is also sufficient, but at the moment we will only give a better description of pure semisimple finitely accessible additive categories.

\begin{prop} \label{prop:pss_fin_acc}
Let $\B$ be a finitely accessible additive category. Then the following are equivalent:
\begin{enumerate}
\item $\B$ is pure semisimple in the sense of Definition~\ref{defn:pss};
\item Each object in $\B$ is a coproduct of (indecomposable) finitely
  presentable objects;
\item Each flat right $\A$--module is projective, where $\A = \fpB$.
\end{enumerate}
\end{prop}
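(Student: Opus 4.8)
The plan is to prove the chain of implications $(3)\Rightarrow(2)\Rightarrow(1)$ directly, and to derive $(1)\Rightarrow(3)$ by reducing to the module-category case already handled in Proposition~\ref{prop:pss_ModR}. Throughout I identify $\B$ with $\FlatA$ for $\A=\fpB$ via Proposition~\ref{prop:repres}; under this identification the finitely presentable objects of $\B$ correspond to the finitely generated projective $\A$-modules (equivalently, the objects of $\A$, since $\A$ has splitting idempotents), and coproducts in $\B$ are the usual direct sums of modules.

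For $(3)\Rightarrow(2)$: if every flat $\A$-module is projective, then in particular every object of $\B$ is a projective $\A$-module, hence — by Kaplansky's theorem~\cite{Kap} applied over the ring-with-several-objects $\A$, exactly as in the proof of Proposition~\ref{prop:pss_ModR} — a direct sum of countably generated projective modules. But a countably generated flat module is a direct limit of finitely generated projectives, and being itself projective it is a direct summand of a countably generated free module; a standard Eilenberg-swindle/Kaplansky-type argument (again as recalled in the proof of Proposition~\ref{prop:pss_ModR}) upgrades this to: every countably generated projective flat $\A$-module is a direct sum of finitely generated (hence finitely presentable) objects. Assembling, every object of $\B$ is a coproduct of finitely presentable objects; and by the Krull–Remak–Schmidt–Azumaya theorem each such coproduct may be refined into a coproduct of indecomposable finitely presentable objects, since finitely presentable objects of $\B$ have local endomorphism rings or at least decompose into ones with local endomorphism rings (this uses that $\fpB$ is a skeletally small additive category with splitting idempotents). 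For $(2)\Rightarrow(1)$: if every object of $\B$ is a coproduct of indecomposable finitely presentable objects, then since $\fpB$ is skeletally small there is a \emph{set} $\{X_i\}_{i\in I}$ of representatives of the indecomposable finitely presentable objects; put $X=\coprod_{i\in I}X_i$. Every object of $\B$ is then a coproduct of copies of the $X_i$, hence a summand (in fact a coproduct of subcoproducts) of coproducts of copies of $X$, so $\B=\Add X$ and $\B$ is pure semisimple in the sense of Definition~\ref{defn:pss}.

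For $(1)\Rightarrow(3)$: suppose $\B=\Add X$ for some $X\in\B$. The key point is that $\Htp(\B)$ is then generated by a set: for instance $\Loc\{\,\cpx I_{X,0}\,\}$, or a similarly chosen small set of complexes built from $X$ and its shifts, is all of $\Htp(\B)$, by an argument dual to the one in the proof of Theorem~\ref{thm:gen_by_set} (one checks that every stalk complex, hence every bounded complex, hence — using coproducts and triangle completions — every complex, lies in the localizing subcategory generated by the shifts of $\cpx I_{X,0}$, because each component of any complex is a summand of a coproduct of copies of $X$). Having this, I would \emph{not} route through $\Htp$ again; instead I observe directly that $\B=\Add X$ with $X$ a $\kappa$-generated object forces, via~\cite[Theorem 26.1]{AF} exactly as in Proposition~\ref{prop:pss_ModR}, that every object of $\B$ is a direct sum of $\lambda$-generated objects for $\lambda=\max(\kappa,\aleph_0)$; since $\B=\FlatA$, this is a statement about flat $\A$-modules, and the same homological argument used for modules over a ring (every module being $\Sigma$-pure-injective $\Rightarrow$ pure-projective, combined with flatness) shows each flat $\A$-module is pure-projective, hence a summand of a direct sum of finitely presented flat modules; but a finitely presented flat module over any ring-with-several-objects is projective, so each flat $\A$-module is a summand of a projective, hence projective. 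This gives $(3)$.

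The main obstacle I anticipate is not any single implication but the transfer of the classical module-theoretic results (Kaplansky's theorem~\cite{Kap}, the higher-cardinality refinement~\cite[Theorem 26.1]{AF}, the $\Sigma$-pure-injectivity argument of~\cite{GJ}, and the coincidence of finitely presented flat with projective) from $\ModR$ to $\ModA$ for a skeletally small additive $\A$: these are standard — $\ModA$ is a locally finitely presented Grothendieck category with enough projectives and shares the relevant functor-category formalism, cf.~\cite[Appendix B]{JL} — but one must check that each cited theorem's hypotheses (splitting idempotents, existence of a generating set of finitely generated projectives, exactness properties of filtered colimits) genuinely hold in the ``ring with several objects'' setting. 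A secondary subtlety is the passage from ``coproduct of finitely presentable objects'' to ``coproduct of \emph{indecomposable} finitely presentable objects'': this requires invoking Azumaya's theorem in the additive category $\fpB$, which is legitimate because $\fpB$ is a skeletally small additive category with splitting idempotents (Proposition~\ref{prop:repres}), so every finitely presentable object decomposes as a finite coproduct of objects with local endomorphism rings — but this decomposition-into-indecomposables step is exactly where one uses that $\B$ is \emph{finitely accessible} and not merely additive, and it should be flagged rather than glossed.
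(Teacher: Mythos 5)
Your reduction to $\B=\FlatA$, the cycle of implications, and the step $(2)\Rightarrow(1)$ match the paper, but both substantive implications in your sketch break down exactly at the points where the paper has to invoke nontrivial theorems. In $(3)\Rightarrow(2)$, the claim that a countably generated projective (flat) $\A$--module is a direct sum of finitely generated objects ``by a standard Eilenberg-swindle/Kaplansky-type argument'' is the whole difficulty, and no such standard argument exists: the Eilenberg trick only produces isomorphisms of the form $P\oplus F\cong F$ and cannot manufacture decompositions into finitely generated summands, the proof of Proposition~\ref{prop:pss_ModR} contains no such argument (there the finer decomposition is quoted from the literature, not proved), and in general it is simply false that projective modules decompose into finitely generated ones. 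What makes the conclusion true here is that condition $(3)$ says $\A$ is a right perfect category, and Bass' theorem (the paper's citation \cite[B.12, B.13]{JL}) then yields that every projective is a direct sum of finitely generated projectives with local endomorphism rings. For the same reason your justification of the refinement into indecomposables is wrong as stated: splitting of idempotents in $\fpB$ does not imply that finitely presentable objects decompose into objects with local endomorphism rings (it need not even imply they have indecomposable summands), so Krull--Remak--Schmidt--Azumaya is not available on those grounds; the indecomposability in $(2)$ again comes from perfectness via Bass' theorem.

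In $(1)\Rightarrow(3)$ the gap is the phrase ``the same homological argument used for modules over a ring.'' Pure semisimplicity of $\B=\FlatA$ together with \cite[Theorem 26.1]{AF} only gives that every \emph{flat} $\A$--module is a direct sum of $\lambda$--generated modules; it says nothing about arbitrary objects of $\ModA$, so you cannot conclude, as in Proposition~\ref{prop:pss_ModR}, that every module is $\Sigma$--pure-injective, nor that pure epimorphisms in $\ModA$ split, and your assertion that each flat module is pure-projective is precisely the unproved point (moreover, pure-projectivity would only make $F$ a summand of a direct sum of finitely presented modules, not of finitely presented \emph{flat} ones). This is exactly where the paper uses the result of Guil Asensio, Izurdiaga and Torrecillas, \cite[Corollary 3.6]{GIT}: bounded decompositions of all flat modules force $\A$ to be right perfect (with Chase's theorem \cite{Ch} as an alternative when $\B$ has products). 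If you wish to avoid that citation you must supply a genuine substitute, for instance: argue per module that every flat $M$ is $\Sigma$--pure-injective (all direct sums of copies of $M$ are flat, so the decomposition hypothesis applies to them), and then note that for flat $F$ a projective presentation $0\to K\to P\to F\to 0$ is pure exact with $K$ flat, hence pure-injective, hence the sequence splits and $F$ is projective. Nothing of this kind appears in your sketch, so as written the implication is not proved.
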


\begin{proof}
For the whole argument, we put $\A = \fpB$ and without loss of
generality assume that $\B = \FlatA$.

$(1) \implies (3)$. Assume that $\FlatA$ is pure semisimple. As in the
proof for Proposition~\ref{prop:pss_ModR}, we can use a generalization~\cite[Theorem 26.1]{AF} of Kaplansky's theorem,
to deduce that there is a cardinal number $\lambda$ \st
each flat $\A$--module is a direct sum of at most
$\lambda$--generated flat $\A$--modules. The key step is then
contained in~\cite[Corollary 3.6]{GIT} which says that under the
latter condition $\A$ is a right perfect category. That is, it
satisfies the equivalent conditions of Bass'
theorem~\cite[B.12]{JL} (or more precisely, its version for contravariant
functors $\A \to \Ab$). One of the equivalent conditions is condition
$(3)$.

$(3) \implies (2)$. This is a consequence of Bass' theorem;
see~\cite[B.13]{JL}.

$(2) \implies (1)$. Trivial, $\B = \Add X$ where
$X = \bigoplus_{Y \in \A} Y$.
\end{proof}

For further reference, we mention one more condition which
one might impose on a finitely accessible additive category.
Namely, it is well known that for a ring $R$, the category $\FlatR$ is
closed under products
\iff $R$ is left coherent. This generalizes in a natural way for
finitely accessible additive categories. Let us recall that an
additive category $\A$ is said to have \emph{weak cokernels} if
for each morphism $X \to Y$ there is a morphism $Y \to Z$ \st
$\A(Z,W) \to \A(Y,W) \to \A(X,W)$ is exact for all $W \in \A$.

\begin{lem} \label{lem:coh}
Let $\B$ be a finitely accessible additive category and $\A =
\fpB$. Then the following are equivalent:
\begin{enumerate}
\item $\B$ has products.
\item $\FlatA$ is closed under products in $\ModA$.
\item $\A$ has weak cokernels.
\end{enumerate}
\end{lem}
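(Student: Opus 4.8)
The plan is to prove the cycle of implications $(2) \implies (1) \implies (3) \implies (2)$, using throughout the identification $\B \simeq \FlatA$ from Proposition~\ref{prop:repres}, so that finitely presentable objects of $\B$ correspond to the representable functors in $\ProjA$ and every flat functor is a filtered colimit of representables.

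For $(2) \implies (1)$: if $\FlatA$ is closed under products inside the Grothendieck category $\ModA$, then products of flat functors computed in $\ModA$ lie in $\FlatA$ and are automatically products in $\FlatA$; hence $\B \simeq \FlatA$ has products. For $(1) \implies (3)$: the key observation is that weak cokernels in $\A$ are exactly what is needed to ``present'' morphisms, and the failure of weak cokernels is detected by a product of representables not being flat. Concretely, I would argue that the full subcategory $\ProjA$ of representable functors has weak cokernels if and only if $\A$ does (a weak cokernel of $\A(-,X) \to \A(-,Y)$ is realized by a morphism $Y \to Z$ in $\A$), and then show that when $\A$ fails to have weak cokernels one can build a morphism whose ``cokernel'' in $\ModA$ forces an infinite product of representables out of $\FlatA$; dualizing the standard argument for the coherence of rings (products of flats flat $\iff$ finitely presented modules closed under ... ) gives that products existing in $\B$ — equivalently, a suitable limit condition on $\ModA$ — yields weak cokernels in $\A$. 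The cleanest route is probably: $\B$ having products means the inclusion $\fpB = \A \hookrightarrow \B$ is such that $\B$ is the category of flat functors on a category with weak cokernels; one invokes the dual of the classical statement that $\ModR$-flat-closed-under-products $\iff R$ left coherent $\iff$ the category of finitely presented left modules has cokernels, replacing ``cokernels of the finitely presented category'' by ``weak cokernels of $\A$'' since $\A$ need not be abelian.

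For $(3) \implies (2)$: assume $\A$ has weak cokernels. I would show directly that an arbitrary product $\prod_{i} F_i$ of flat functors $F_i \in \FlatA$ is again flat. Using Lazard's theorem in the functor-category setting (flat $=$ filtered colimit of representables), it suffices to check that $\prod_i F_i$ is a filtered colimit of representables, or equivalently that every morphism $\A(-,X) \to \prod_i F_i$ from a representable factors through a representable. Such a morphism amounts to a compatible family $X \to$ (objects approximating each $F_i$); the weak-cokernel hypothesis lets one solve the relevant lifting/amalgamation problems so that the whole family is dominated by a single object, giving the required factorization. This shows $\FlatA$ is closed under products in $\ModA$.

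The main obstacle I anticipate is the implication $(1) \implies (3)$: extracting weak cokernels of $\A$ purely from the existence of products in the (possibly non-abelian) category $\B = \FlatA$, without a ready-made abelian-category argument, requires care. The trick will be to work inside the ambient Grothendieck category $\ModA$, where products are exact and representables are projective, and to translate ``$\B$ has products'' into ``$\FlatA$ is product-closed in $\ModA$'' first (the easy half), and then run the classical coherence argument contravariantly; the only genuinely new point is that, since $\A$ lacks cokernels, one gets weak cokernels rather than cokernels, which is exactly the weakening built into the statement. The other two implications are routine once the correspondence $\B \simeq \FlatA$ and the description of flats as filtered colimits of representables are in hand.
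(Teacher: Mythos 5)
The paper itself gives no argument for this lemma beyond the citation \cite[\S 2.1]{CB}, so your sketch has to stand on its own, and as written it has two genuine gaps. First, in $(3)\implies(2)$ the flatness test you propose --- ``every morphism $\A(-,X)\to\prod_i F_i$ from a representable factors through a representable'' --- is vacuous: any such morphism factors through $\A(-,X)$ itself. What flatness (being a filtered colimit of representables, equivalently filteredness of the category of representables over the functor) really requires in the additive setting is: for every $u\colon\A(-,Y)\to F$ and every $f\colon X\to Y$ in $\A$ with $u\circ\A(-,f)=0$, there exist $g\colon Y\to Z$ in $\A$ with $gf=0$ and $v\colon\A(-,Z)\to F$ with $v\circ\A(-,g)=u$. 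With this test the step is exactly where weak cokernels enter: for $F=\prod_i F_i$, flatness of each $F_i$ gives $g_i\colon Y\to Z_i$ with $g_if=0$ and $v_i\circ\A(-,g_i)=u_i$; take a weak cokernel $g\colon Y\to Z$ of $f$, write $g_i=h_ig$, and set $v=(v_i\circ\A(-,h_i))_i$. Your ``lifting/amalgamation'' sentence points in this direction, but with the criterion you actually state the argument proves nothing.

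Second, your $(1)\implies(3)$ is not an argument but an appeal to ``the dual of the classical coherence statement''. Chase's theorem is about rings; its analogue for a small additive category $\A$ is precisely the content of this lemma, so invoking it is circular. (Also, the half you call easy is the genuinely trivial one, $(2)\implies(1)$; the reduction $(1)\implies(2)$ does need a short Yoneda argument: test a product formed in $\FlatA$ against the representables, which lie in $\FlatA$, to see it agrees pointwise with the product in $\ModA$, hence that product is flat.) A complete $(2)\implies(3)$ is short: given $f\colon X\to Y$ in $\A$, choose a representative set of morphisms $g\colon Y\to Z_g$ in $\A$ with $gf=0$ and form $P=\prod_g\A(-,Z_g)$, flat by $(2)$, hence a filtered colimit of representables. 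Since $\A(-,Y)$ is finitely presented in $\ModA$, the canonical map $\A(-,Y)\to P$ with components $\A(-,g)$ factors through some stage $\A(-,W)$; since $\A(-,X)$ is finitely presented and the composite $\A(-,X)\to\A(-,W)\to P$ vanishes, it vanishes after some transition map $\A(-,W)\to\A(-,W')$. By Yoneda the resulting morphism $Y\to W'$ kills $f$, and every $g$ with $gf=0$ factors through it (compose the structure map $\A(-,W')\to P$ with the projection onto $\A(-,Z_g)$), so $Y\to W'$ is a weak cokernel of $f$. With these two repairs your cycle $(2)\implies(1)\implies(3)\implies(2)$ does work.
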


\begin{proof}
See~\cite[\S 2.1]{CB}.
\end{proof}

\begin{rem}
If $\B$ has products, one can give a more classical proof for
Proposition~\ref{prop:pss_fin_acc}. Namely, one can then replace the
argument by Guil Asensio, Izurdiaga and Torrecillas~\cite{GIT}
by an older and simpler argument by Chase~\cite[Theorem 3.1]{Ch}.
\end{rem}

% -----------------------------------------------------------------------------
\section{When is the homotopy category well generated?}
\label{sec:well-gen}

In this final section, we have developed enough tools to answer the
question when exactly is the homotopy category of complexes $\Htp(\B)$
well generated if $\B$ is a finitely accessible additive category. This
way, we will generalize Proposition~\ref{prop:well_gen_rings} and also
give a rather complete answer to \cite[Question 4.2]{HJ} asked by
Holm and J\o{}rgensen. Finally, we will give another criterion
for a triangulated category to be (or not to be) well generated and
this way construct other classes of examples of categories which are
not well generated.

First, we recall a crucial result due to Neeman:

\begin{lem} \label{lem:ProjA}
Let $\A$ be a skeletally small additive category. Then
the homotopy category $\Htp(\ProjA)$ is
$\aleph_1$--well generated. If, moreover, $\A$ has weak cokernels,
then $\Htp(\ProjA)$ is compactly generated.
\end{lem}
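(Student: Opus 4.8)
The plan is to exhibit an explicit set of generators for $\Htp(\ProjA)$ and verify conditions $(1)$, $(2)$, $(3)$ of Definition~\ref{defn:well_gen} for an appropriate $\kappa$. The natural candidate is the set $\clS$ of all complexes of projectives $\cpx P = (P^n, d^n)$ such that each $P^n$ is a countably generated projective $\A$--module (equivalently, $P^n$ is a summand of a countable coproduct of representables) and $\cpx P$ is concentrated in a window of finite length, or more robustly, the set of all \emph{acyclic} complexes of countably generated projectives together with the stalk complexes $\cpx I_{P,n}$ with $P$ countably generated projective; but since we want to generate all of $\Htp(\ProjA)$ (not a subcategory of acyclics), the cleanest choice is the set of complexes of finitely generated projectives that are bounded below, closed up under a countable-size bound. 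Let me instead structure it as: let $\clS$ be a representative set for the isomorphism classes of complexes $\cpx P$ with $|\cpx P| \le \aleph_0$ in the sense that each $P^n$ is a direct summand of a coproduct of countably many representable functors; one checks $\Htp(\ProjA) = \Loc\clS$ because every projective module is a coproduct of countably generated ones (Kaplansky), hence every complex of projectives is, up to homotopy, built from such pieces via coproducts and mapping cones along the ``stupid'' filtration $\sigma_{\le n}\cpx P$.

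First I would verify condition $(1)$: if $\cpx X \in \Htp(\ProjA)$ satisfies $\Htp(\ProjA)(\cpx P, \cpx X) = 0$ for all $\cpx P \in \clS$, then in particular $\Htp(\ProjA)(\cpx I_{A,n}, \cpx X) = 0$ for every representable $A \in \A$ and every $n$, which forces $\cpx X$ to be acyclic as a complex of projectives in the relevant sense; then using the acyclic test complexes (truncations of projective resolutions of cycles, which lie in $\clS$ after the countable-size reduction), one concludes $\cpx X = 0$ in $\Htp(\ProjA)$ by a standard argument — a morphism from an appropriate acyclic complex of projectives detects non-contractibility. Second, condition $(2)$, $\aleph_1$--smallness: a morphism $\cpx P \to \coprod_i \cpx Z_i$ in $\Htp(\ProjA)$ with $\cpx P \in \clS$ can be lifted to $\Comp(\ProjA)$, and since each component $P^n$ is countably generated and there are countably many of them, the image of the chain map hits only countably many summands $\cpx Z_i$; hence it factors through a countable subcoproduct up to homotopy. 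Third, condition $(3)$: given $f\colon \cpx P \to \coprod_i \cpx Z_i$ with $\cpx P \in \clS$, lift to a chain map, factor componentwise through countably-generated projective ``pieces'' $Q_i^n \subseteq Z_i^n$ containing the image of $P^n$ and closed under the relevant differentials, arranging each $\cpx Q_i$ to be a complex of countably generated projectives (hence in $\clS$) — here one uses that countably generated projectives are closed under countable coproducts and under the operations needed to close up a subcomplex, which is why $\kappa = \aleph_1$ rather than $\aleph_0$.

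For the second assertion, suppose $\A$ has weak cokernels. Then by Lemma~\ref{lem:coh} (applied with $\B = \FlatA$), finitely presented $\A$--modules admit projective resolutions by \emph{finitely generated} projectives, and more importantly every flat module is a filtered colimit of finitely generated projectives in a controlled way; the point is that the stalk complexes $\cpx I_{A,n}$ for $A$ \emph{finitely generated} projective, together with acyclic complexes of finitely generated projectives arising as resolutions, form a set of \emph{compact} objects generating $\Htp(\ProjA)$. Compactness of $\cpx I_{A,0}$ is immediate since $A = \A(-, a)$ is finitely presentable in $\ModA$; compactness of the bounded acyclic complexes of finitely generated projectives uses the weak cokernel condition to keep resolutions finitely generated in each degree, so that a chain map out of such a complex into a coproduct factors through a finite subcoproduct. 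That these objects generate follows as before, replacing ``countably generated'' by ``finitely generated'' throughout, which works precisely because weak cokernels let us build finitely-generated subcomplexes closed under differentials.

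The main obstacle will be condition $(3)$ (and its compact analogue): producing, from a chain map $\cpx P \to \coprod_i \cpx Z_i$, a \emph{coherent} family of subcomplexes $\cpx Q_i \subseteq \cpx Z_i$ that are simultaneously (a) complexes of countably generated (resp.\ finitely generated) projectives, (b) direct summands in each degree so that $\cpx Q_i$ genuinely lies in $\Htp(\ProjA)$, and (c) large enough to contain the image of $\cpx P$ while the inclusion $\coprod \cpx Q_i \hookrightarrow \coprod \cpx Z_i$ is literally a coproduct of the inclusions $\cpx Q_i \hookrightarrow \cpx Z_i$. The subtlety is that the naive image subcomplex need not have projective components; one must enlarge it by a back-and-forth closure argument, alternately throwing in complementary summands and differential-images, and verify the closure terminates after countably (resp.\ finitely) many steps — this is exactly where the regularity of $\aleph_1$ and the closure of the relevant class of projectives under countable coproducts is used, and it is the technical heart of the whole argument. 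I expect the rest to be routine, and I would cite Neeman's treatment in~\cite{N2} for the detailed bookkeeping.
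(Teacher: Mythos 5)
Your proposal sets out to reprove Neeman's theorem rather than to cite it (the paper's proof is essentially the citation: \cite[Theorem 1.1]{N2} together with the observation that the arguments of \cite[\S\S 4--7]{N2} generalize verbatim once representable functors replace finitely generated free modules and the duality between the idempotent completions of covariant and contravariant representables replaces the duality $\Hom_R(-,R)$). That would be fine if the sketch were sound, but it has a genuine gap exactly where the whole difficulty of the theorem lies, namely condition $(1)$ of Definition~\ref{defn:well_gen}: that your set of ``small'' complexes generates $\Htp(\ProjA)$. Your justification --- Kaplansky's theorem plus the stupid filtration --- does not work: Kaplansky decomposes each component $P^n$ into countably generated summands, but the differentials of $\cpx P$ need not respect these decompositions, so the complex is not thereby exhibited as built from complexes of countably generated projectives; and the brutal truncations only reduce to bounded below complexes of \emph{arbitrary} projectives, not small ones. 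Similarly, in your verification of $(1)$, vanishing of maps from shifted representables only shows that $\cpx X$ is acyclic, and an acyclic complex of projectives need not be contractible (compare Neeman's Theorem 8.6 on pure acyclicity, quoted in Example~\ref{expl:pure_acyclic}); the ``standard argument'' you invoke at this point is precisely the nontrivial content of Neeman's proof, requiring a careful filtration/deconstruction of an arbitrary complex of projectives by subcomplexes with countably generated projective components. Note also that Lemma~\ref{lem:perfect} and Theorem~\ref{thm:gen_by_set} show that conditions $(2)$ and $(3)$ --- the parts you single out as the technical heart --- hold just as well for $\Htp(\ModA)$, where well generation fails; so the heart of the lemma is generation, and your sketch does not supply it beyond the closing deferral to \cite{N2}, which is in effect what the paper's proof already does.

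A second, smaller problem concerns the coherent case. Weak cokernels in $\A$ do not yield finitely generated projective resolutions of finitely presented \emph{right} $\A$--modules (that would require weak kernels); by Lemma~\ref{lem:coh} they correspond to $\FlatA$ being closed under products, i.e.\ to the analogue of \emph{left} coherence, and Neeman exploits this on the other side, via the duality with finitely presented covariant functors, to produce enough compact objects --- this is exactly the translation the paper's proof records. Your explanation (``weak cokernels let us build finitely generated subcomplexes closed under differentials'') therefore misidentifies the mechanism; moreover, stalk complexes of representables are compact without any coherence hypothesis, and they never suffice to generate, so compactness of those objects is not where coherence enters. If you want a self-contained proof, the work to be done is: (i) show that every complex of projectives lies in $\Loc$ of the bounded below complexes of finitely generated projectives (Neeman's filtration argument), and (ii) in the presence of weak cokernels, identify enough compacts via the duality; the smallness and factorization conditions are comparatively routine, as in Lemma~\ref{lem:perfect}. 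Otherwise, simply cite \cite{N2} and explain the dictionary, as the paper does.
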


\begin{proof}
Neeman has proved in~\cite[Theorem 1.1]{N2} that, given a ring $R$,
the category $\Htp(\ProjR)$ is $\aleph_1$--well generated, and if $R$ is
left coherent then $\Htp(\ProjR)$ is even compactly generated. The
actual arguments, contained in~\cite[\S\S 4--7]{N2}, immediately
generalize to the setting of projective modules over small categories.
The role of finitely generated free modules over $R$ is taken by
representable functors, and instead of the duality between the
categories of left and right projective finitely
generated modules we consider the duality between the idempotent
completions of the categories of covariant and contravariant
representable functors.
\end{proof}

We already know that $\Htp(\B)$ is always locally well generated. When
employing Lemma~\ref{lem:ProjA}, we can show the
following statement, which is one of the main results of this paper:

\begin{thm} \label{thm:fin_acc_well_gen}
Let $\B$ be a finitely accessible additive category. Then the
following are equivalent:
\begin{enumerate}
\item $\Htp(\B)$ is well generated;
\item $\Htp(\B)$ is $\aleph_1$--well generated;
\item $\B$ is pure semisimple.
\end{enumerate}
If, moreover, $\B$ has products, then the conditions are further
equivalent to
\begin{enumerate}
\item[(4)] $\Htp(\B)$ is compactly generated.
\end{enumerate}
\end{thm}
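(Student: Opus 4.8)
The plan is to prove the cycle of implications $(1) \implies (3) \implies (2) \implies (1)$, together with the equivalence to $(4)$ under the coherence hypothesis. Two of these implications are essentially already available: $(1) \implies (3)$ follows immediately from Theorem~\ref{thm:gen_by_set}, since a well generated category is in particular generated by a set, and hence $\B$ must be pure semisimple. The implication $(2) \implies (1)$ is trivial by the definition of well generated. So the heart of the matter is $(3) \implies (2)$, and for the last part, $(3) \implies (4)$ when $\B$ has products.

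For $(3) \implies (2)$, I would first use Proposition~\ref{prop:repres} to reduce to the case $\B = \FlatA$ where $\A = \fpB$ is skeletally small with splitting idempotents. By Proposition~\ref{prop:pss_fin_acc}, pure semisimplicity of $\B$ means precisely that every flat $\A$--module is projective, i.e. $\FlatA = \ProjA$. Hence $\Htp(\B) \simeq \Htp(\ProjA)$, and Lemma~\ref{lem:ProjA} tells us that $\Htp(\ProjA)$ is $\aleph_1$--well generated. This gives $(2)$ directly. For the final clause, if $\B$ additionally has products, then by Lemma~\ref{lem:coh} the category $\A$ has weak cokernels, so the second part of Lemma~\ref{lem:ProjA} applies and $\Htp(\ProjA)$ is compactly generated; thus $(3) \implies (4)$. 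Since $(4) \implies (1)$ trivially (compactly generated $=$ $\aleph_0$--well generated), this closes the loop and shows all four conditions are equivalent in the coherent case.

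The main obstacle, such as it is, lies in correctly invoking the machinery: one must be careful that Proposition~\ref{prop:pss_fin_acc} is stated for $\B$ finitely accessible and identifies pure semisimplicity with $\FlatA = \ProjA$ (condition $(3)$ there), and that the equivalence $\B \simeq \FlatA$ of Proposition~\ref{prop:repres} is an equivalence of additive categories with coproducts, so that it induces a triangle equivalence $\Htp(\B) \simeq \Htp(\FlatA)$ preserving all the relevant structure. Everything else is a matter of chaining the cited results. I would therefore present the proof essentially as a short bookkeeping argument, spelling out the reduction $\B \simeq \FlatA = \ProjA$ explicitly and then quoting Lemma~\ref{lem:ProjA} and Lemma~\ref{lem:coh} for the two size bounds.

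\begin{proof}
The implication $(2) \implies (1)$ is trivial, and $(1) \implies (3)$ follows from Theorem~\ref{thm:gen_by_set}: if $\Htp(\B)$ is well generated, it is generated by a set, hence $\B$ is pure semisimple.

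For $(3) \implies (2)$, put $\A = \fpB$. By Proposition~\ref{prop:repres}, $\A$ is skeletally small with splitting idempotents and $\B$ is equivalent, as an additive category with coproducts, to $\FlatA$; this equivalence induces a triangle equivalence $\Htp(\B) \simeq \Htp(\FlatA)$. Since $\B$ is pure semisimple, condition $(3)$ of Proposition~\ref{prop:pss_fin_acc} gives $\FlatA = \ProjA$. Hence $\Htp(\B) \simeq \Htp(\ProjA)$, which is $\aleph_1$--well generated by Lemma~\ref{lem:ProjA}. Thus $\Htp(\B)$ is $\aleph_1$--well generated, proving $(2)$.

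Finally assume $\B$ has products. Then $(4) \implies (1)$ is clear. For $(3) \implies (4)$, we argue as above that $\Htp(\B) \simeq \Htp(\ProjA)$; moreover, by Lemma~\ref{lem:coh}, $\B$ having products forces $\A$ to have weak cokernels. The second part of Lemma~\ref{lem:ProjA} then shows that $\Htp(\ProjA)$, and hence $\Htp(\B)$, is compactly generated.
\end{proof}
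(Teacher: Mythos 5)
Your proof is correct and follows essentially the same route as the paper: $(1)\Rightarrow(3)$ via Theorem~\ref{thm:gen_by_set} (the paper additionally cites Proposition~\ref{prop:brown}(2) for the fact that a well generated category is generated by a set as a localizing subcategory), and $(3)\Rightarrow(2)$, $(4)$ by the reduction $\B \simeq \FlatA = \ProjA$ using Propositions~\ref{prop:repres} and~\ref{prop:pss_fin_acc}, followed by Lemmas~\ref{lem:ProjA} and~\ref{lem:coh}. No substantive difference from the paper's argument.
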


\begin{proof}
$(1) \implies (3)$. If $\Htp(\B)$ is well generated, it is in
particular generated by a set of objects as a localizing subcategory
of itself; see Proposition~\ref{prop:brown}. Hence $\B$ is
pure semisimple by Theorem~\ref{thm:gen_by_set}.

$(3) \implies (2)$ and $(4)$. If $\B$ is pure semisimple and $\A = \fpB$, then
$\B$ is equivalent to $\FlatA$ by Proposition~\ref{prop:repres}, and
$\FlatA = \ProjA$ by Proposition~\ref{prop:pss_fin_acc}. The conclusion
follows by Lemmas~\ref{lem:ProjA} and~\ref{lem:coh}.

$(2)$ or $(4) \implies (1)$. This is obvious.
\end{proof}

\begin{rem}
$(1)$ Neeman proved in~\cite{N2} more than stated in
Lemma~\ref{lem:ProjA}. He described a particular set of generators
for $\Htp(\ProjA)$ satisfying conditions of
Definition~\ref{defn:well_gen}. Namely, $\Htp(\ProjA)$ is always
$\aleph_1$--well generated by a representative set of bounded below
complexes of finitely generated projectives. Moreover, he gave an explicit
description of compact objects in $\Htp(\ProjA)$ in~\cite[7.12]{N2}.

$(2)$ An exact characterization of when $\Htp(\B)$ is compactly generated
and thereby a complete answer to~\cite[Question 4.2]{HJ} does not seem
to be known. We have shown that this reduces to the problem when
$\Htp(\ProjA)$ is compactly generated. A sufficient condition is given
in Lemma~\ref{lem:ProjA}, but it is probably not necessary. On the
other hand, if $R = k[x_1, x_2, x_3, \dots] / (x_i x_j;\; i,j \in \N)$
where $k$ is a field, then $\Htp(\FlatR)$ coincides with
$\Htp(\ProjR)$, but the latter is not a compactly generated
triangulated category; see~\cite[7.16]{N2} for details.
\end{rem}

\begin{expl}
The above theorem adds other locally well generated but not well
generated triangulated categories to our repertoire. For example
$\Htp(\mathcal{TF})$, where $\mathcal{TF}$ stands for the category of
all torsion-free abelian groups, has this property.
\end{expl}

We finish the paper with some examples of triangulated categories
where the fact that they are not generated by a set is less
obvious. For this purpose, we will use the following criterion: 

\begin{prop} \label{prop:well_gen_crit}
Let $\T$ be a locally well generated triangulated category and $\clL$
be a localizing subcategory. Consider the diagram
$$
\begin{CD}
\clL @>{\subseteq}>> \T @>{Q}>> \T/\clL.
\end{CD}
$$
If two of the categories $\clL$, $\T$ and $\T/\clL$ are well
generated, so is the third.
\end{prop}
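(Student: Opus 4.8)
The plan is to reduce the statement to the single observation that, for a localizing subcategory $\clL$ of the locally well generated category $\T$, the category $\clL$ is well generated if and only if $\clL = \Loc\clS$ for some set $\clS$ of objects. The direction from right to left is immediate from the definition of a locally well generated category (Definition~\ref{defn:loc_well_gen}). Conversely, if $\clL$ is well generated then by definition there is a set $\clS$ of objects of $\clL$ satisfying conditions $(1)$, $(2)$ and $(3)$ of Definition~\ref{defn:well_gen} for the triangulated category $\clL$, and Proposition~\ref{prop:brown}$(2)$ applied to $\clL$ shows that $\clL$ equals the localizing subcategory of $\clL$ generated by $\clS$; since $\clL$ is closed in $\T$ under coproducts, shifts and triangle completions, this coincides with $\Loc\clS$ formed inside $\T$.

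I would then use Brown representability to put all three situations into the shape of a Bousfield localization. If $\clL$ is well generated, the inclusion $\clL \to \T$ has a right adjoint by \cite[8.4.4]{N}, exactly as in the proof of Proposition~\ref{prop:bousfield}. If $\T$ is well generated, then for each $X$ in $\T/\clL$ the assignment $Y \mapsto (\T/\clL)(QY, X)$ is a contravariant cohomological functor on $\T$ which takes coproducts to products, hence is representable by Proposition~\ref{prop:brown}$(1)$, and so the quotient functor $Q$ has a right adjoint $Q_\rho$. In either case one obtains a localization functor $L\colon \T \to \T$ with $\Ker L = \clL$, together with a complementary colocalization functor (see \cite[\S 9.1]{N} or \cite[\S 4.12]{Kr}), a canonical equivalence $\T/\clL \simeq \Img L$ (see \cite[9.1.16]{N} or \cite[4.9.1]{Kr}), and the identity $\Img L = \{X \in \T \mid \T(Z, X) = 0 \text{ for all } Z \in \clL\}$; note that coproducts in $\clL$ are computed as in $\T$, whereas coproducts in $\Img L \simeq \T/\clL$ are obtained by applying $L$ to coproducts in $\T$.

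With this setup I would treat the three cases in turn. If $\T$ and $\clL$ are well generated, then $\clL = \Loc\clS$ by the reduction, and $\T/\clL \simeq \Img L$ is well generated by the standard fact that a Verdier quotient of a well generated category by a localizing subcategory generated by a set is again well generated (\cite[\S 8.4]{N}, \cite[\S 7]{Kr}). If $\clL$ and $\T/\clL$ are well generated, then again $\clL = \Loc\clS$, and a generating set for $\T$ should be assembled from $\clS$ together with a generating set for $\T/\clL \simeq \Img L$, the latter regarded as a set of objects of $\T$; here the vanishing condition $(1)$ of Definition~\ref{defn:well_gen} follows by an elementary splicing of orthogonality arguments, since $\T(Z, X) = 0$ for all $Z \in \clS$ forces $X \in \Img L$ by the identity above, and then testing against the generators of $\Img L$, via the adjunction, forces $X = 0$. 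Finally, if $\T$ and $\T/\clL$ are well generated, I would pick a regular cardinal $\gamma$ large enough that $\T$ is $\gamma$--well generated, that $Q$ carries $\gamma$--compact objects (in the sense of \cite{N}) to $\gamma$--compact objects, and that the $\gamma$--compact objects generate $\T/\clL$; one then shows $\clL = \Loc(\clL \cap \T^\gamma)$, where $\T^\gamma$ is a representative set of $\gamma$--compact objects of $\T$, and since $\clL$ is closed under coproducts in $\T$ the objects of $\clL \cap \T^\gamma$ remain $\gamma$--small inside $\clL$, so that conditions $(2)$ and $(3)$ of Definition~\ref{defn:well_gen} hold automatically and $\clL$ is $\gamma$--well generated.

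The step I expect to be the real obstacle, and the reason one cannot simply take the union of the evident generating sets, is the matching of smallness bounds across the localization sequence. Coproducts in $\clL$ agree with those in $\T$, but coproducts in $\T/\clL \simeq \Img L$ are computed by applying $L$, so $\kappa$--smallness of an object with respect to one of the three categories need not entail $\kappa$--smallness with respect to another; reconciling this forces one to pass to suitably large regular cardinals and to invoke the structure theory of $\kappa$--compact objects in well generated triangulated categories. The same difficulty resurfaces, in the third case, as the nontrivial assertion $\clL = \Loc(\clL \cap \T^\gamma)$, whose proof genuinely uses how $\gamma$--compact objects behave under a Verdier quotient with well generated target. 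In each case the required bookkeeping is supplied by the localization theory of \cite[\S 8.4]{N} and \cite[\S 7]{Kr}, and organizing it is where the actual work of the proof lies.
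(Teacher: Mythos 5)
Your reduction ($\clL$ well generated iff $\clL=\Loc\clS$ for a set $\clS$, using Definition~\ref{defn:loc_well_gen} in one direction and Proposition~\ref{prop:brown}(2) in the other) and your first case ($\T$ and $\clL$ well generated imply $\T/\clL$ well generated, by the standard localization theorem \cite[7.2.1]{Kr}) agree with the paper. The genuine gap is in the implication ``$\clL$ and $\T/\clL$ well generated $\Rightarrow$ $\T$ well generated.'' There you never invoke the standing hypothesis that $\T$ is locally well generated: you set up the Bousfield localization and then try to verify Definition~\ref{defn:well_gen} for $\T$ directly, but you only check condition $(1)$ for the set $\clS\cup\C$ and explicitly defer conditions $(2)$ and $(3)$ to unspecified cardinal ``bookkeeping.'' That deferral is exactly where the argument breaks down: a set satisfying $(1)$ alone does not make $\T$ well generated, and there is no reason why objects taken from $\Img L$ (whose coproducts are computed by applying $L$) should be $\kappa$--small in $\T$ for any fixed $\kappa$, let alone satisfy the factorization condition $(3)$; your closing paragraph names this obstacle but does not overcome it. The missing idea is that no smallness verification is needed at all: since $\T$ is locally well generated, it suffices to prove $\T=\Loc(\clS\cup\C)$. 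This is the paper's argument: put $\clL'=\Loc(\clS\cup\C)$ inside $\T$; by Lemma~\ref{lem:loc}, $\clL'/\clL$ is a localizing subcategory of $\T/\clL$ containing $\C$, hence $\clL'/\clL=\T/\clL$ because $\T/\clL=\Loc\C$; and since $\clL'$ is thick and contains $\clL$, every object of $\T$, being isomorphic in $\T/\clL$ to an object of $\clL'$, already lies in $\clL'$, so $\T=\clL'$ and Definition~\ref{defn:loc_well_gen} finishes this case. No adjoints or localization functors are needed for it.

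For the remaining implication ($\T$ and $\T/\clL$ well generated imply $\clL$ well generated) your sketch is essentially a compressed re-derivation of the known theorem that the kernel of a coproduct-preserving exact functor between well generated categories is well generated; the paper simply cites \cite[7.4.1]{Kr}, applied to $Q$ after observing that $\clL=\Ker Q$ by thickness of $\clL$. Citing that result is fine, but as written your claims are not a proof: the identity $\clL=\Loc(\clL\cap\T^\gamma)$ is the whole content of that theorem, and conditions $(2)$ and $(3)$ do not hold ``automatically'' for $\clL\cap\T^\gamma$ --- condition $(3)$ demands the factorization property, not mere $\gamma$--smallness. So this case should be handled by citation (as in the paper), while the middle case needs the Lemma~\ref{lem:loc} argument above; as it stands, your proposal does not establish that implication.
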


\begin{proof}
If $\clL = \Loc\clS$ and $\T/\clL = \Loc\C$ for some sets $\clS,\C$,
let $\clL'$ be the localizing subcategory of $\T$ generated by
the set of objects $\clS \cup \C$. Lemma~\ref{lem:loc} yields the
equality $\T/\clL = \clL'/\clL$. Hence also $\T = \clL'$, so $\T$ is
generated by a set, and consequently $\T$ is well generated.

If $\clL$ and $\T$ are well generated, so is $\T/\clL$
by~\cite[7.2.1]{Kr}. Finally, one knows that $X \in \T$ belongs to $\clL$
\iff $QX = 0$; see~\cite[2.1.33 and 1.6.8]{N}. Therefore, if $\T$ and
$\T/\clL$ are well generated, so is $\clL$ by~\cite[7.4.1]{Kr}.
\end{proof}

\begin{rem}
We stress here that by saying that $\T/\clL$ is well generated, we in
particular mean that $\T/\clL$ is a usual category in the sense that
all morphism spaces are sets and \emph{not} proper classes.
\end{rem}

Now we can conclude by showing that some homotopy categories of
acyclic complexes are not well generated.

\begin{expl} \label{expl:acyclic}
Let $R$ be a ring, $\Htp_\textrm{ac}(\ModR)$ be the full subcategory
of $\Htp(\ModR)$ formed by all acyclic complexes, and
$\clL = \Loc\{R\}$. It is well-known but also an easy consequence of
Proposition~\ref{prop:bousfield} that the composition
$$
\Htp_\textrm{ac}(\ModR) \overset{\subseteq}\longrightarrow
\Htp(\ModR) \overset{Q}\longrightarrow \Htp(\ModR) / \clL
$$
is a triangle equivalence between
$\Htp(\ModR)/ \clL$ and $\Htp_\textrm{ac}(\ModR)$.

By Proposition~\ref{prop:well_gen_rings}, $\Htp(\ModR)$ is well
generated \iff $R$ is right pure semisimple. Therefore,
$\Htp_\textrm{ac}(\ModR)$ is well generated \iff $R$ is right
pure semisimple by Proposition~\ref{prop:well_gen_crit}. In fact,
$\Htp_\textrm{ac}(\ModR)$ is not generated by any set of objects if
$R$ is not right pure semisimple. As particular examples, we may take
$R = \Z$ or $R = k(\cdot\!\rightrightarrows\!\cdot)$ for any field $k$.
\end{expl}

\begin{expl} \label{expl:pure_acyclic}
Let $\B$ be a finitely accessible category. Recall that $\B$ is
equivalent to $\FlatA$ for $\A = \fpB$. Then the
natural exact structure on $\FlatA$ coming from $\ModA$ is nothing
else than the well-known exact structure given by pure exact
short sequences in $\B$ (see eg.\ \cite{CB}).

We denote by $\Htp_\textrm{pac}(\FlatA)$ the full subcategory
of $\Htp(\FlatA)$ formed by all
complexes exact with respect to this exact structure, and call such
complexes \emph{pure acyclic}. More explicitly, $\cpx X \in \Htp(\FlatA)$ is
pure acyclic \iff $\cpx X$ is acyclic in $\ModA$ and all the cycles
$Z^i(\cpx X)$ are flat. Note that $\Htp_\textrm{pac}(\FlatA)$ is
closed under taking coproducts in $\Htp(\FlatA)$.

Neeman proved in \cite[Theorem 8.6]{N2} that $\cpx X \in \Htp(\FlatA)$
is pure acyclic \iff there are no non-zero homomorphisms from any
$\cpx Y \in \Htp(\ProjA)$ to $\cpx X$.
Then either by combining Proposition~\ref{prop:bousfield} with
Lemma~\ref{lem:ProjA} or by using \cite[8.1 and 8.2]{N2}, one shows
that the composition
$$
\Htp_\textrm{pac}(\FlatA) \overset{\subseteq}\longrightarrow
\Htp(\FlatA) \overset{Q}\longrightarrow \Htp(\FlatA) / \Htp(\ProjA)
$$
\begin{sloppypar}
\noindent
is a triangle equivalence. Now again,
Proposition~\ref{prop:well_gen_crit} implies that
$\Htp_\textrm{pac}(\FlatA)$ is well generated \iff $\B$ is pure
semisimple. If $\B$ is of the form $\FlatR$ for a ring $R$, this
precisely means that $R$ is right perfect.
\end{sloppypar}

As a particular example, $\Htp_\textrm{pac}(\mathcal{TF})$ is locally
well generated but not well generated, where $\mathcal{TF}$ stands for
the class of all torsion-free abelian groups.
\end{expl}

% =============================================================================

\end{document}